\theoremstyle{plain}
\newtheorem{thm}{Theorem}[section]
\newtheorem{prop}[thm]{Proposition}
\newtheorem{clly}[thm]{Corollary}
\newtheorem{lemma}[thm]{Lemma}
\newtheorem{defi}[thm]{Definition}
\newtheorem{maintheorem}{Theorem}
\newcommand{\re}{{\Bbb R}}
\title{Sectional Anosov flows: Existence of Venice masks with two singularities}
\author{A. M. L\'opez B., H. M. S\'anchez S.
        \thanks{
{\em Key words and phrases}:
Sectional Anosov flow, Maximal invariant, Sectional hyperbolic set, Transitive set, Venice mask.
This work is partially supported by CAPES, Brazil.}}
\date{}
\begin{document}
\maketitle

\begin{abstract}
We show the existence of venice masks \index{Venice mask}
(i.e. nontransitive sectional Anosov flows with dense periodic orbits,
\cite{bmp}, \cite{mp}, \cite{mp2},\cite{lec}) containing two equilibria on certain
compact $3$-manifolds. Indeed, we present two type of examples
in which the homoclinic classes composing their maximal invariant set
intersect in a very different way.
\end{abstract}


\section{Introduction}

The dynamical systems theory describes different properties about asymptotic behavior, 
stability, relationships among system's elements and its properties. 
It is well known that the hyperbolic systems own some features and 
properties that provide very important information about its behavior. 
With the purpose of extending the notion of hyperbolicity, arise definitions and 
a new theory, such as partial hyperbolicity, singular hyperbolicity and sectional hyperbolicity.
Thus, we begin by considering the relationship between the 
hyperbolic and sectional hyperbolic theory. Recall, the sectional hyperbolic sets and 
sectional Anosov flows were introduced in \cite{mo} and \cite{mem} respectively 
as a generalization of the hyperbolic sets and Anosov flows. They contain important examples such as 
the saddle-type hyperbolic attracting sets, the geometric and multidimensional 
Lorenz attractors \cite{abs}, \cite{bpv}, \cite{gw}.
A natural way is to observe the properties that are preserved or 
which are not in the new scenario.
Particularly, we mention two important properties related to hyperbolic sets which are not 
satisfied by all sectional hyperbolic sets. The first is the spectral decomposition theorem \cite{sma}. 
It says that an attracting hyperbolic set $\Lambda=Cl(Per(X))$ is a finite disjoint union of 
homoclinic classes, where $Per(X)$ is the set of periodic points of $X$. The second says that an Anosov 
flow on a closed manifold is transitive if and only if it has dense periodic orbits.
This results are false for sectional Anosov flows, i.e., sets whose maximal invariant 
is a sectional-hyperbolic set \cite{mp2}. Specifically, it is proved that there exists a 
sectional Anosov flow such that it is supported on a compact $3$-manifold, it has dense periodic 
orbits, is the union non disjoint of two homoclinic classes but is not transitive. So, a sectional 
Anosov flow is said a {\em Venice mask} if it has dense periodic orbits but is not transitive. 
The only known examples of venice masks have one or three singularities, 
and they are characterized by having two properties: are the union non 
disjoint of two homoclinic classes and the intersection of its homoclinic 
classes is the closure of the unstable manifold of a singularity \cite{mp}, \cite{mp2}, \cite{bmp}.
Thus, we provide two examples 
of venice masks with two singularities, but with different features. 
In particular, each one is the union of two different homoclinic classes. However, 
for the first, the intersection of homoclinic classes is 
the closure of the unstable manifold of two singularities. 
Whereas for the second, the intersection of homoclinic classes 
is just a hyperbolic periodic orbit. \\

Let us state our results in a more precise way.\\

Consider a Riemannian compact manifold $M$ of dimension three (a {\em compact $3$-manifold} for short).
		We denote by $\partial M$ the boundary of $M$.
		Let ${\cal X}^1(M)$ be the space
		of $C^1$ vector fields in $M$ endowed with the
		$C^1$ topology.
		Fix $X\in {\cal X}^1(M)$, inwardly
		transverse to the boundary $\partial M$ and denotes
		by $X_t$ the flow of $X$, $t\in I\!\! R$.
		
The {\em $\omega$-limit set} of $p\in M$ is the set
$\omega_X(p)$ formed by those $q\in M$ such that $q=\lim_{n \rightarrow \infty}X_{t_n}(p)$ for some
sequence $t_n\to\infty$. The {\em $\alpha$-limit set} of $p\in M$ is the set
$\alpha_X(p)$ formed by those $q\in M$ such that $q=\lim_{n \rightarrow \infty}X_{t_n}(p)$ for some
sequence $t_n\to -\infty$. The {\em non-wandering set} of $X$ is
the set $\Omega(X)$ of points $p\in M$ such that
for every neighborhood $U$ of $p$ and every $T>0$ there
is $t>T$ such that $X_t(U)\cap U\neq\emptyset$.
Given $\Lambda \in M$ compact, we say that $\Lambda$ is {\em invariant} 
if $X_t(\Lambda)=\Lambda$ for all $t\in I\!\! R$.
We also say that $\Lambda$ is {\em transitive} if
$\Lambda=\omega_X(p)$ for some $p\in \Lambda$; {\em singular} if it
contains a singularity and
{\em attracting}
if $\Lambda=\cap_{t>0}X_t(U)$
for some compact neighborhood $U$ of it.
This neighborhood is often called
{\em isolating block}.
It is well known that the isolating block $U$ can be chosen to be
positively invariant, i.e., $X_t(U)\subset U$ for all
$t>0$.
An {\em attractor} is a transitive attracting set.
An attractor is {\em nontrivial} if it is
not a closed orbit.

The {\em maximal invariant} set of $X$ is defined by
	$M(X)= \bigcap_{t \geq 0} X_t(M)$.

\begin{defi}
		\label{hyperbolic}
		A compact invariant set $\Lambda$ of $X$ is {\em hyperbolic}
		if there are a continuous tangent bundle invariant decomposition
		$T_{\Lambda}M=E^s\oplus E^X\oplus E^u$ and positive constants
		$C,\lambda$ such that

		\begin{itemize}
		\item $E^X$ is the vector field's
		direction over $\Lambda$.
		\item $E^s$ is {\em contracting}, i.e.,
		$
		\mid\mid DX_t(x) \left|_{E^s_x}\right.\mid\mid
		\leq Ce^{-\lambda t}$,
		for all $x \in \Lambda$ and $t>0$.
		\item $E^u$ is {\em expanding}, i.e.,
		$
		\mid\mid DX_{-t}(x) \left|_{E^u_x}\right.\mid\mid
		\leq Ce^{-\lambda t},
		$
		for all $x\in \Lambda$ and $t> 0$.
		\end{itemize}
\end{defi}

We denote by $m(L)$ the minimum co-norm of a linear
	operator $L$, i.e., $m(L)= inf_{v \neq 0} \frac{\left\|Lv\right\|}{\left\|v\right\|} $.
\begin{defi}
\label{d2}
A compact invariant set
$\Lambda$ of $X$
is {\em partially hyperbolic}
if there is a continuous invariant
splitting
$
T_\Lambda M=E^s\oplus E^c
$
with $E^c_x\neq 0$, $E^s_x\neq 0$ for all $x\in \Lambda$, such that the following properties
hold for some positive constants $C,\lambda$:

\begin{itemize}
\item
$E^s$ is {\em contracting}, i.e.,
$
\mid\mid DX_t(x) \left|_{E^s_x}\right. \mid\mid
\leq Ce^{-\lambda t},
$
for all $x\in \Lambda$ and $t>0$.
\item
$E^s$ {\em dominates} $E^c$, i.e.,
$
\frac{\mid\mid DX_t(x) \left|_{E^s_x}\right. \mid\mid}{m(DX_t(x) \left|_{E^c_x}\right. )}
\leq Ce^{-\lambda t},
$
for all $x\in \Lambda$ and $t>0$.
\end{itemize}
\end{defi}

We say the central subbundle $E^c_x$ of $\Lambda$ is
{\em sectionally-expanding} if
$$dim(E^c_x) = 2$$ and
$$\left| det(DX_t(x) \left|_{E^c_x}\right. ) \right| \geq C^{-1}e^{\lambda t},\quad
\forall x \in \Lambda \quad and \quad t > 0. $$
Here $det(DX_t(x) \left|_{E^c_x}\right. )$ denotes
the jacobian of $DX_t(x)$ along $E^c_x$.

\begin{defi}
\label{shs}
A {\em sectional-hyperbolic set}
is a partially hyperbolic set whose singularities (if any) are hyperbolic and whose central subbundle is sectionally-expanding.
\end{defi}

\begin{defi}
\label{secflow}
We say that $X$ is a {\em Anosov flow} if $M(X)=M$ is a hyperbolic
set. $X$ is a {\em sectional-Anosov flow} if $M(X)$ is a sectional-hyperbolic
set.
\end{defi}

The Invariant Manifold Theorem [3] asserts that if $x$ belongs
to a hyperbolic set $H$ of $X$, then the sets

$$W^{ss}_X(p)  =  \{x\in M:d(X_t(x),X_t(p))\to 0, t\to \infty\} \qquad and$$

$$W^{uu}_X(p)  =  \{x\in M:d(X_t(x),X_t(p))\to 0, t\to -\infty\},$$

are $C^1$ immersed submanifolds of $M$ which are tangent at $p$ to the subspaces $E^s_p$ and $E^u_p$ of $T_pM$ respectively.

$$W^{s}_X(p) =   \bigcup_{t\in I\!\! R}W^{ss}_X(X_t(p))\qquad\and\qquad W^{u}_X(p)  =   \bigcup_{t\in I\!\! R}W^{uu}_X(X_t(p))$$

are also $C^1$ immersed submanifolds tangent to $E^s_p\oplus E^X_p$ and $E^X_p\oplus E^u_p$ at $p$ respectively.

Recall that a singularity of a vector field is hyperbolic if
the eigenvalues of its linear part
have non zero real part.

\begin{defi}
\label{ll}
We say that a singularity $\sigma$ of a sectional-Anosov flow $X$ is {\em Lorenz-like}
if  it has three real eigenvalues $\lambda^{ss},\lambda^{s},\lambda^u$ with $\lambda^{ss}<\lambda^s<0<-\lambda^s<\lambda^u$.
$W^s(\sigma)$ is the manifold associated to eigenvalues $\lambda^{ss},\lambda^s$, 
and $W^{ss}(\sigma)$ is the manifold associated to eigenvalue $\lambda^{ss}$.
\end{defi}

\begin{defi}
A periodic orbit of $X$ is the orbit of some $p$ for which there is a minimal
$t > 0$ (called the period) such that $X_t(p) = p$.
\end{defi}

A homoclinic orbit of a hyperbolic periodic orbit $O$ is an orbit $\gamma\subset W^s(O)\cap W^u(O)$.
If additionally $T_qM = T_qW^s(O) + T_qW^u(O)$ for some (and hence all) point $q\in\gamma$, then we say that $\gamma$ is a transverse homoclinic orbit of $O$. The homoclinic class $H(O)$ of a hyperbolic periodic orbit $O$ is the closure of the union of the transverse homoclinic orbits of $O$. We say that a set $\Lambda$ is a homoclinic class if $\Lambda = H(O)$ for some hyperbolic periodic orbit $O$.

\begin{defi}
A Venice mask is a sectional-Anosov
flow with dense periodic
orbits which is not transitive.
\end{defi}

With these definitions we can state our main results.

\begin{maintheorem}
\label{thF}
There exists a Venice mask $X$ with two singularities supported on a $3$-manifold $M$, such that:
\begin{itemize}
\item $M(X)$ is the union of two homoclinic classes $\mathcal{H}_X^1, \mathcal{H}_X^2$.
\item $\mathcal{H}_X^1\cap \mathcal{H}_X^2=O$, where $O$ is a hyperbolic periodic orbit.
\end{itemize}
\end{maintheorem}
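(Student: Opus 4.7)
The plan is to construct $X$ explicitly as two ``wings'' modelled on the geometric Lorenz attractor, coupled through one hyperbolic periodic orbit. I would fix a compact $3$-manifold with boundary $M$ containing two disjoint neighborhoods $U_1,U_2$ of prospective Lorenz-like equilibria $\sigma_1,\sigma_2$, together with a global cross section $\Sigma$ admitting a first-return map $F:\Sigma\setminus(\ell_1\cup\ell_2)\to\Sigma$, where each $\ell_i=W^s(\sigma_i)\cap\Sigma$ is the ``cut line''. On each $U_i$ I would prescribe the linear normal form of Definition~\ref{ll}, so that the partially hyperbolic splitting with sectional expansion is explicit there and extends through the flow to all of $M(X)$.

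Next I would decompose $\Sigma$ as a union of two wings $\Sigma_1\cup\Sigma_2$ meeting in the finite set of intersection points of one prescribed hyperbolic periodic orbit $O$ with $\Sigma$, and design $F$ so that: (i) it contracts a stable foliation by vertical lines on each $\Sigma_i$; (ii) it factors through a piecewise expanding $1$-dimensional map $f$ on the corresponding quotient with $|f'|>\sqrt{2}$; (iii) each branch of $W^u(\sigma_i)\cap\Sigma$ is sent into $\Sigma_i$, save for a single transverse crossing of $\Sigma_{3-i}$ at a point of $O\cap\Sigma$. Properties (i)--(ii) are the classical input that yields sectional-hyperbolicity of $M(X)$ and density of periodic points in the corresponding solenoidal set, while (iii) is what distinguishes the present example: it forces the unstable sets of the two singularities to be disjoint off of $O$. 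Suspending $F$ and gluing smoothly into the local linear models near $\sigma_1,\sigma_2$ produces the flow $X$ on $M$, inwardly transverse to $\partial M$ by the standard ceiling-function argument.

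To finish, one identifies the two homoclinic classes via the general analysis of Lorenz-like equilibria in sectional-Anosov flows: the closure $\overline{W^u(\sigma_i)}\cap M(X)=\mathcal{H}_X^i$ is a homoclinic class, $M(X)=\mathcal{H}_X^1\cup\mathcal{H}_X^2$, and condition (iii) immediately gives $\mathcal{H}_X^1\cap\mathcal{H}_X^2=O$. Non-transitivity follows because neither wing is forward-accumulated by the other except along $O$, while density of periodic orbits in $M(X)$ comes from the expansion of $f$ on each $\Sigma_i$ combined with the identification of periodic orbits of $X$ with periodic points of $F$. The main obstacle I expect is the enforcement of (iii): the kneading data of the two critical values of $f$ must be tuned so that the two post-critical orbits share exactly the periodic itinerary of $O$ and no other common tail, a rigid condition on the one-dimensional model which nevertheless admits a concrete piecewise affine solution and then a $C^1$ smoothing compatible with the Lorenz-like behaviour near each $\sigma_i$.
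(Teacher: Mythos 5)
Your overall architecture---a cross-section carrying a return map that contracts a stable foliation and factors over an expanding one-dimensional map, two pieces of the section producing two homoclinic classes, density of periodic orbits from the expansion, non-transitivity from the two-piece decomposition, and sectional hyperbolicity obtained by extending the splitting from the periodic orbits---is the same as the paper's. The paper realizes it differently in the details (a DA-type perturbation of the saddle of a Cherry flow creates the two Lorenz-like singularities $\sigma_1,\sigma_2$ together with a source whose removal supplies the third hole of the solid tritorus; the return map is the modified two-dimensional map $H$ of Subsection \ref{pre1}, whose halves $H^{+},H^{-}$ are each invariant and satisfy $A_H^{+}\cap A_H^{-}=\{P\}$), but that is a difference of realization, not of mechanism.

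The genuine problem is your condition (iii), which is precisely the condition you invoke for the distinguishing conclusion $\mathcal{H}_X^1\cap \mathcal{H}_X^2=O$. First, $W^u(\sigma_i)$ cannot cross $\Sigma_{3-i}$ ``at a point of $O\cap\Sigma$'': a point of $O$ has backward orbit equal to $O$, not converging to $\sigma_i$, so $W^u(\sigma_i)\cap O=\emptyset$ and the prescribed crossing point does not exist. Second, and more seriously, if an arc of $W^u(\sigma_i)\cap\Sigma$ genuinely entered the other wing, then by forward invariance of that wing and the locally-eventually-onto property of the one-dimensional factor (your (ii); see Corollary \ref{clly}) its forward images would be dense in the attractor of that wing, giving $Cl(W^u(\sigma_i))\supset \mathcal{H}_X^{3-i}$; the intersection of the two classes would then contain a whole homoclinic class rather than a single orbit. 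That is exactly the mechanism behind the Theorem \ref{thG} example, where the intersection is $Cl(W^u(\sigma_1)\cup W^u(\sigma_2))$, so your (iii) builds the wrong example. What is needed is the opposite of (iii): each wing forward-invariant, each $W^u(\sigma_i)\cap\Sigma$ confined entirely to its own wing (Proposition \ref{prop1}), and the two wings meeting only in a single invariant stable leaf $l$ through $P$ (hypothesis \textbf{(L1)} and Figure \ref{7}); contraction of $H$ along $l$ then forces the two attractors to meet only at the fixed point $P$, whose orbit is $O$. In particular, no tuning of kneading data of the critical values is required---your closing paragraph about making the two post-critical orbits ``share exactly the periodic itinerary of $O$'' is a symptom of the same misconception and should be discarded.
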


\begin{maintheorem}
\label{thG}
There exists a Venice mask $Y$ with two singularities supported on a $3$-manifold $N$, such that:
\begin{itemize}
\item $N(Y)$ is the union of two homoclinic classes $\mathcal{H}_Y^1, \mathcal{H}_Y^2$.
\item $\mathcal{H}_Y^1\cap \mathcal{H}_Y^2=Cl(W^u(\sigma_1)\cup W^u(\sigma_2))$, 
      where $\sigma_1,\sigma_2$ are the singularities of $Y$.
\end{itemize}
\end{maintheorem}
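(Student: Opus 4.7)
The plan is to build $Y$ by a controlled modification of the geometric Lorenz template, adapted to host two Lorenz-like singularities $\sigma_1,\sigma_2$, and then arranged so that the two branches of each $W^u(\sigma_i)$ weave through a common cross-section in a prescribed Venice-mask pattern. Concretely, I would start from two copies of a geometric Lorenz attractor and, as in the Morales--Pujals construction of Venice masks, modify each copy so that its first-return map to a planar cross-section $\Sigma_i$ has two images that partially overlap (instead of being separated by the stable foliation). Each modified piece will be a sectional-Anosov flow on a 3-manifold with boundary whose maximal invariant set is already a single homoclinic class $\mathcal{H}_Y^i$ containing $\sigma_i$, and in which the closure of the two separatrices of $W^u(\sigma_i)$ hits $\Sigma_i$ along a pair of $C^1$ arcs that bound the region of overlap.

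Next, I would glue the two pieces along parts of their boundaries so that the resulting flow remains inwardly transverse to $\partial N$ and so that the return map to $\Sigma=\Sigma_1\cup\Sigma_2$ sends a strip of $\Sigma_i$ into $\Sigma_j$ precisely along the image of $W^u(\sigma_i)$. The gluing is engineered to preserve the partially hyperbolic splitting and the sectional expansion in the central plane field; this is verified, as usual, by a cone-field computation on $\Sigma$, using that the derivative of the return map is strongly expanding in the unstable direction and strongly contracting in the stable one, so that Definitions~\ref{d2} and~\ref{shs} apply to $N(Y)$.

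With the vector field fixed, the three dynamical conclusions are obtained as follows. First, the classical pull-back argument on each $\mathcal{H}_Y^i$ shows that periodic orbits are dense in $N(Y)=\mathcal{H}_Y^1\cup \mathcal{H}_Y^2$, because each piece of the return map is conjugate to a Lorenz-like expanding map whose periodic points are dense. Second, I would exhibit an open set $U_i\subset \mathcal{H}_Y^i\setminus \mathcal{H}_Y^j$ (the part of the attractor corresponding to the non-overlapping strip of $\Sigma_i$) whose points never reach the interior of the other piece, which gives non-transitivity. Third, the identity
\[
 \mathcal{H}_Y^1\cap \mathcal{H}_Y^2 \;=\; \cl\bigl(W^u(\sigma_1)\cup W^u(\sigma_2)\bigr)
\]
is proved by tracking the intersection of the two homoclinic classes on $\Sigma$: by construction the overlap of the two return-map images is exactly the pair of strips bounded by the images of the separatrices of $W^u(\sigma_1)$ and $W^u(\sigma_2)$; pulling back by the flow, the intersection of the $\omega$-limit sets of the two homoclinic classes is saturated by the orbits lying in $W^u(\sigma_1)\cup W^u(\sigma_2)$ and their accumulations.

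The step I expect to be the main obstacle is the last one, i.e. showing that the intersection of the two homoclinic classes is no larger than $\cl(W^u(\sigma_1)\cup W^u(\sigma_2))$ — in particular, that no periodic orbit, nor any other recurrent piece, accidentally belongs to both $\mathcal{H}_Y^1$ and $\mathcal{H}_Y^2$ (this is what distinguishes Theorem~\ref{thG} from Theorem~\ref{thF}). This forces the overlapping region of the return map to be designed so that every orbit entering it is eventually captured, under forward iteration, by the separatrix of one singularity, and under backward iteration by the separatrix of the other; such a construction is possible because the unstable separatrices of a Lorenz-like singularity are precisely the boundaries of the invariant strips on $\Sigma_i$, so identifying these boundaries across the gluing yields exactly the required intersection, and nothing more.
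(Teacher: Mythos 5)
Your construction is genuinely different from the paper's, and it has a gap at the decisive step. The paper does not glue two Lorenz-like pieces, one per singularity: it starts from the known one-singularity Venice mask of \cite{bmp} on a solid bitorus, keeps the common cross-section $R$ and the reflection-symmetric return map satisfying \textbf{(G1)}--\textbf{(G3)} (so the two homoclinic classes are $\hat{A}^{\pm}=Cl\left(\bigcup_{t}Y_t(A_G^{\pm})\right)$, coming from the two invariant halves $H^{\pm}$ of the \emph{single} section $R$), and surgically replaces one connected component of the flow by a new plug carrying the second singularity $\sigma_2$, whose unstable manifold produces the third hole of the resulting solid tritorus $ST_2$. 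In that setup the singular leaves $d^{\pm}$ of $R$ are accumulated by periodic points of both halves (Corollary \ref{clly} together with the \emph{leo} property), which is exactly what forces $W^u(\sigma_1)\cup W^u(\sigma_2)$ into \emph{both} classes; the remaining identification of the intersection is then inherited from Proposition \ref{prop1}, Lemma \ref{homcla} and \cite{bmp}.

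The gap in your version sits precisely where you flag it, and it is not merely technical. If each glued piece is ``already a single homoclinic class $\mathcal{H}_Y^i$ containing $\sigma_i$,'' then to get $\mathcal{H}_Y^1\cap\mathcal{H}_Y^2\supset Cl(W^u(\sigma_1)\cup W^u(\sigma_2))$ you need $W^u(\sigma_1)\subset\mathcal{H}_Y^2$ and $W^u(\sigma_2)\subset\mathcal{H}_Y^1$, i.e.\ periodic orbits of class $j$ must accumulate, from the relevant side, on the trace of $W^s(\sigma_i)$ in the section for $i\neq j$. Your gluing prescription (a strip of $\Sigma_i$ mapped into $\Sigma_j$ ``along the image of $W^u(\sigma_i)$'') does not arrange this: it places $W^u(\sigma_i)$ near class $j$, whereas the mechanism that actually puts an unstable manifold inside a homoclinic class is accumulation of that class's periodic points on the \emph{stable} leaf of the singularity, followed by saturation under the flow (this is the whole content of Proposition \ref{prop1}). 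It is also in tension with your non-transitivity argument, which wants a part of $\mathcal{H}_Y^i$ that never reaches the other piece. Finally, the reverse inclusion --- that nothing besides $Cl(W^u(\sigma_1)\cup W^u(\sigma_2))$ lies in the intersection --- is only asserted (``such a construction is possible because\ldots''); this is the part the paper delegates to Lemma \ref{homcla} and \cite{bmp}, and it depends on the specific combinatorics of the one-dimensional map \textbf{(H1)}--\textbf{(H5)}, not just on the separatrices bounding the invariant strips. As written, the proposal describes a plausible target geometry but does not supply the argument that distinguishes Theorem \ref{thG} from Theorem \ref{thF}.
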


In section \ref{pre}, we shall be described 
briefly this construction by using one-dimensional 
and two-dimensional maps. In section \ref{exm1}, 
from modifications on the previous maps in Section \ref{pre} and 
by considering  a plug, we shall prove the Theorem \ref{thF}. 
In the same way, in Section \ref{exm2}, by using the venice mask with a 
unique singularity, the Theorem \ref{thG} will be obtained 
by gluing a particular plug preserving the original flow.

\section{Preliminaries}

\subsection{Original plugs}
  In order to obtain the three-dimensional vector field of our 
example, we begin by considering the well known {\em Plykin attractor}\index{Plykin attractor}
and the {\em Cherry flow}\index{Cherry flow}\index{Flow!Cherry} ( See \cite{rob}, \cite{dmp}).

We give a sketch of the flow construction. It will be constructed through 
three steps, firstly by modifying the Cherry flow. 
In fact, we consider a vector field in the square whose flow is
described in Figure \ref{4} a). Note that this vector field has two equilibria: 
a saddle $\sigma$ and a sink $p$. For $\sigma$ one has that its eigenvalues 
$\{\lambda_s, \lambda_u\}$ of $\sigma$ satisfy the relation

$$\lambda_s < 0 < -\lambda_s < \lambda_u.$$

We have depicted a small disk $D$ centered at the attracting equilibrium $p$
Figure \ref{4} b). Note that the flow is pointing inward the edge of the disk.
This finishes the first step for the construction. 

\begin{figure}[htv]
\begin{center}
\input{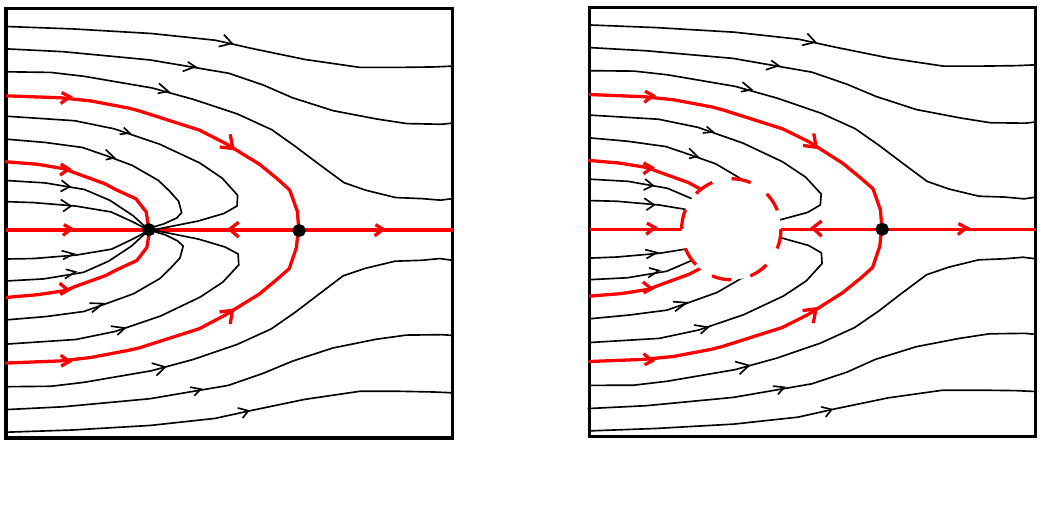_t}
\caption{\label{4} Cherry flow.}
\end{center}
\end{figure}
\pagebreak
\newpage

For the second step we multiply the above vector field by a strong contraction
$\lambda_{ss}$ in order to obtain the vector field described in Figure \ref{c4} a). 
We can choose $\lambda_{ss}$ such that $-\lambda_{ss}$ be large, 
so the resulting vector field will have a Lorenz-like singularity
and this new eigenvalue will be associated with the strong manifold of the 
singularity. This yields a Cherry flow box \index{Flow!Cherry box}\index{Cherry flow!Box}
and finishes the second step for the construction.

\begin{figure}[htv]
\begin{center}
\input{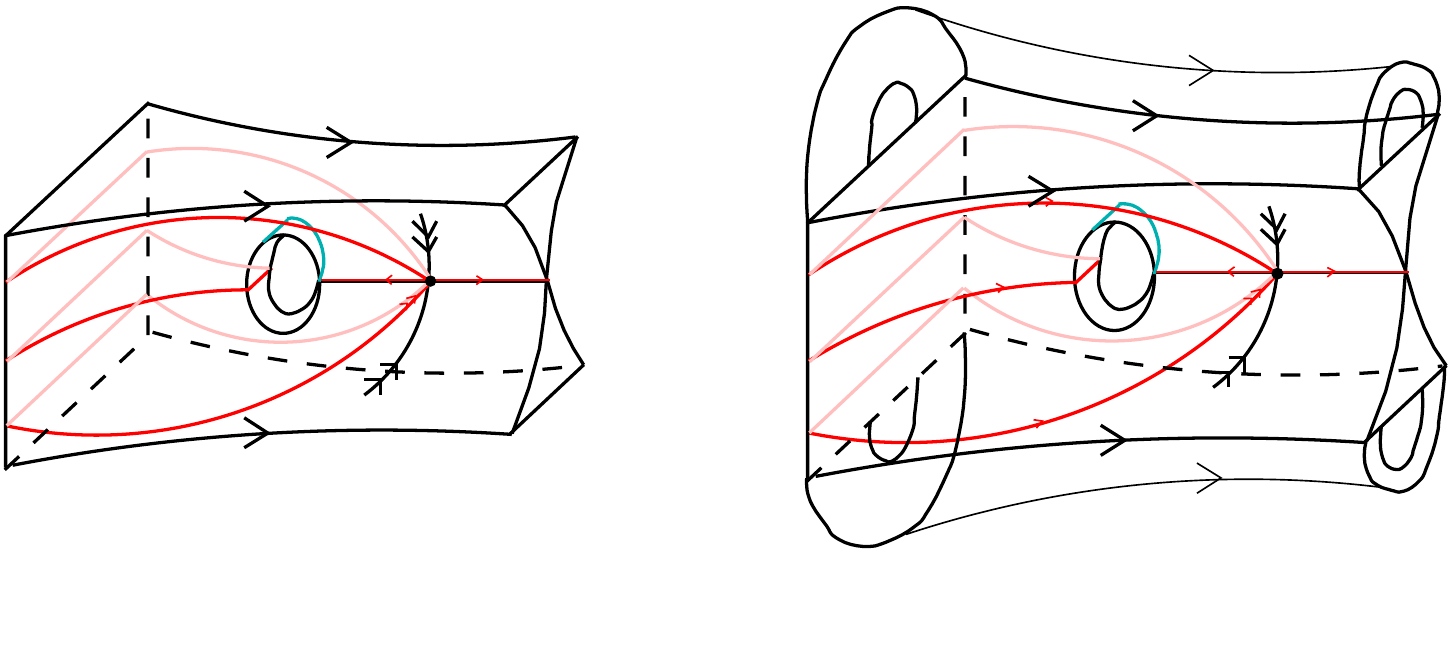_t}
\caption{\label{c4} Cherry flow box and Plug \ref{c4}.}
\end{center}
\end{figure}


From Plykin attractor follows that the construction must have 
at least two holes inasmuch as we will use certain return map. 
Then, the final step is to glue two handles that provides two holes and the 
three dimensional vector field above in order to obtain the vector 
field whose flow is given in Figure \ref{c4} b). 
Hereafter the resulting vector field will be called of Plug \ref{c4}.\\

The hole indicated in this Figure \ref{c4} is nothing but the disk 
$D$ times a compact interval $I_1$. Again, note that the flow is pointing inward 
the edge of the hole by construction. For this reason, we take a solid $3$-ball and 
we define a flow on this one. Indeed, it flow has no singularities, it acts as 
in Figure \ref{5} and will be used for to glue the hole's bound with this one. 
Hereafter the resulting vector field will be called of Plug \ref{5}.

\begin{figure}[htv]
\begin{center}
\input{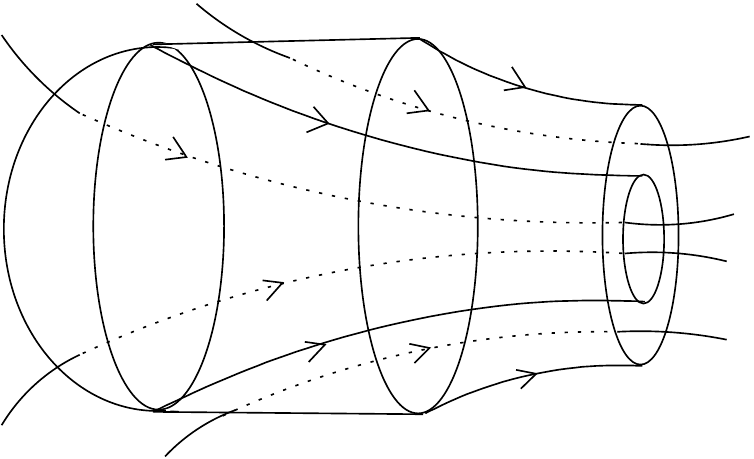_t}
\caption{\label{5} Plug \ref{5}.}
\end{center}
\end{figure}
\pagebreak
\newpage

\section{Modified maps}
\label{maps}

We begin by considering the construction made in \cite{bmp} like model in order 
of obtain the vector fields $X$ and $Y$ of the main theorems. Recall that the 
original model provides tools for a three dimensional example with a unique singularity. 
The aim main is modify the original maps, in order to make a suspension of the 
modify maps via the new plugs. For this purpose, we will do 
such modifications followed by its original maps.

\subsection{One-dimensional map}
\label{map1}

Thus, in the same way of \cite{bmp}, we consider the 
branched $1$-manifold $\mathcal{B}$ consisting of a compact
interval and a circle with branch point $b$. We cut $\mathcal{B}$ open 
along $b$ to obtain a compact interval which we assume to be $[0, 1]$
for simplicity. In $[0, 1]$ we consider three points $0 < d_1 < d_{\ast} < d_2 < 1$,
where $d_{\ast}$ is depicted also in the Figure \ref{1}. These will be the discontinuity points of $f$
as a map of $[0,1]$. The set $\mathcal{B}\setminus\{d_{\ast}\}$ will be the domain of $f$.
We define $f:\mathcal{B}\setminus\{d_{\ast}\} \to \mathcal{B}$ in a way that its graph in $[0,1]$
is the one in Figure \ref{1}.

\begin{figure}[htv]
\begin{center}
\input{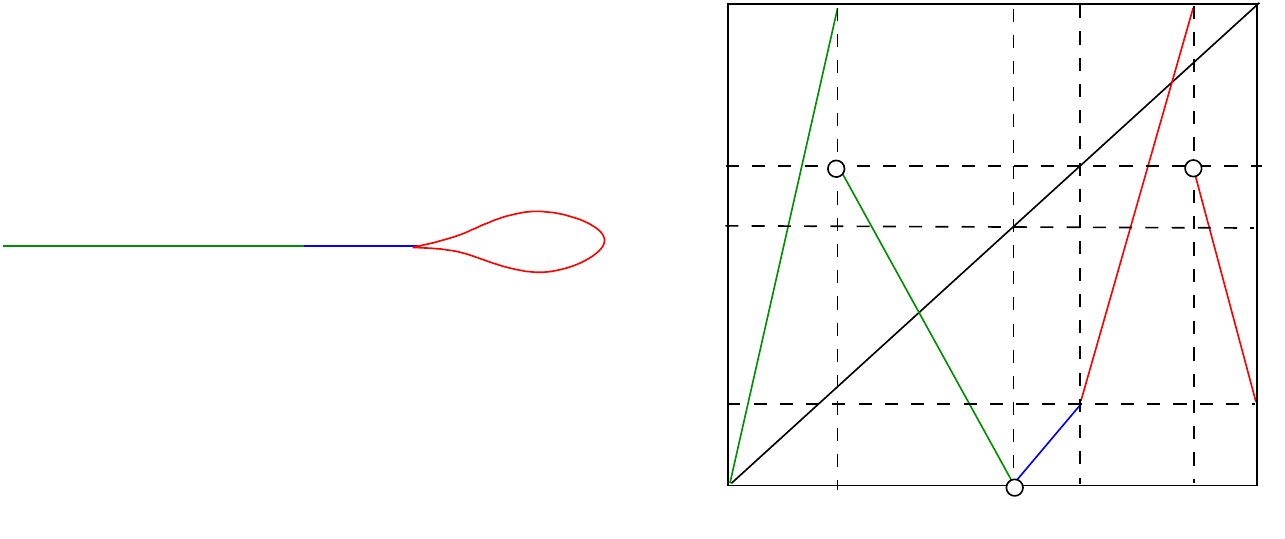_t}
\caption{\label{1}The quotient space and  one-dimensional map.}
\end{center}
\end{figure}

By construction one has that $f$ satisfies the following hypotheses:

\textbf{(H1)}: $Dom(f) = [0, 1] \setminus\{d_{\ast}\}$.

\textbf{(H2)}: $f(0) = 0$; $f(d_1) = f(d_2) = 1$; $f(1) = f(b)\in (0, d_1)$.

\textbf{(H3)}: $f(d_1+) = f(d_2+) = b$; $f(d_1-) = f(d_2-) = 1;$ $f(d_{\ast}+) = f(d_{\ast}-) = 0$.

\textbf{(H4)}: $f([0, d_1]) = [0, 1]$; $f((d_1, d_{\ast})) = (0, b)$; $f((d_{\ast}, d_2]) = (0, 1]$; $f((d_2, 1]) =[f(b),b)$.

\textbf{(H5)}: $f$ is expanding, i.e., $f$ is $C^1$ in $Dom(f)$ and there is $\lambda > 1$ such that
 $|f'(x) |\geq\lambda$, for each $x \in Dom(f)$.

\subsection{Modified one-dimensional map}
\label{pre}

We realize a modification of the above map $f$ . Denote $d_{\ast}=d^+$ 
and let $f^+:\mathcal{B}^+\setminus\{d^+\}\to \mathcal{B}^+$ be in 
a way that its graph in $[0, 1]$ is the one in Figure \ref{6}.

Here, there exist $\varepsilon> 0$ small such that 
$\int_0^{d_1} \sqrt{[(f)'(x)]^2+1}dx<\int_0^{d_1} 
\sqrt{[(f^+)'(x)]^2+1}dx<\int_0^{d_1} \sqrt{[(f)'(x)]^2+1}dx+\varepsilon$ 
and $\int_{d_{\ast}}^{b} \sqrt{[(f)'(x)]^2+1}dx<\int_{d_{\ast}}^{b} 
\sqrt{[(f^+)'(x)]^2+1}dx<\int_{d_{\ast}}^{b} \sqrt{[(f)'(x)]^2+1}dx+\varepsilon$. 
Moreover $f^+$ satisfies \textbf{(H1)-(H5)}. 
We define $f^-(x)=f(-x)$ and denote 
$-d^+=d^-$. $f^-:\mathcal{B}^-\setminus\{d^-\}\to \mathcal{B}^-$.

\begin{figure}[htv]
\begin{center}
\input{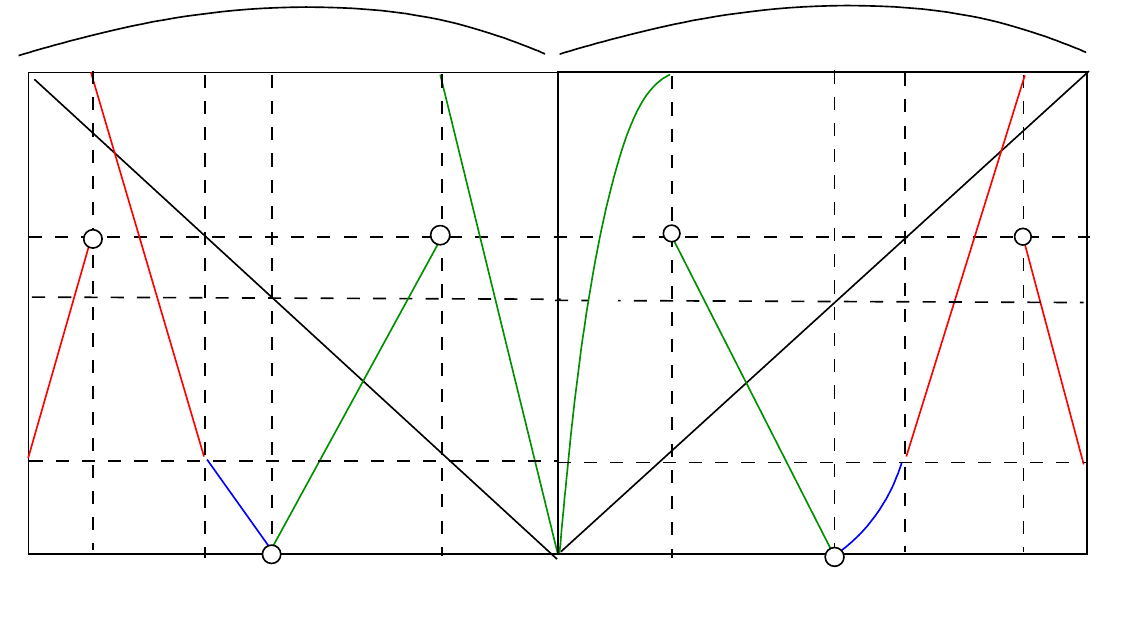_t}
\caption{\label{6} Modified one-dimensional map.}
\end{center}
\end{figure}

These following results examining the properties of $f$ and 
appears in \cite{bmp}. This in turns through a structure closely 
related to \cite{bmp} and by construction we obtain the same properties 
for the $f^+$ map.

\begin{defi}
We say that $f$ is {\em locally eventually onto (leo for short)} \index{Locally eventually onto}
\index{Function!{\em leo}} if given any
open interval $I\subset [0,1]$ there is $m\geq 0$ such that $f^m(I) = [0,1]$.
\end{defi}

\begin{thm}
$f^+$ is {\em leo}.
\end{thm}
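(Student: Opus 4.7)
The plan is to combine the uniform expansion hypothesis \textbf{(H5)} with the fullness condition $f^+([0, d_1]) = [0, 1]$ from \textbf{(H4)}. It suffices to exhibit, for any nonempty open interval $I \subseteq [0,1]$, an iterate $(f^+)^{m'}(I)$ that contains $[0, d_1]$; applying $f^+$ once more then gives $(f^+)^{m'+1}(I) \supseteq f^+([0,d_1]) = [0,1]$, proving the claim.

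To produce such an iterate, I would track $J_n$, a connected component of $(f^+)^n(I)$ chosen to be of maximal length, and argue that $|J_n|$ grows geometrically (with only bounded losses per step) until $J_n$ has $0$ as an endpoint and length at least $d_1$. Whenever $J_n$ lies in a single monotonic branch of $f^+$ (that is, $J_n$ avoids the breakpoints $d_1, d^+, d_2$), \textbf{(H5)} gives $|J_{n+1}| \geq \lambda |J_n|$. When $J_n$ crosses the discontinuity $d^+$, the crucial fact from \textbf{(H3)} is that the two one-sided limits of $f^+$ at $d^+$ both equal $0$: hence the images of $J_n \cap (d_1, d^+)$ and $J_n \cap (d^+, d_2)$ are two intervals both accumulating at $0$, so their union is again a single interval, anchored at $0$, of length at least $(\lambda/2)|J_n|$. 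Crossings of $d_1$ and $d_2$ are handled analogously using the one-sided values from \textbf{(H3)}. Iterating, $|J_n|$ exceeds $d_1$ in finitely many steps; moreover, once $J_n$ first crosses $d^+$, its image is anchored at $0$, a property preserved by all subsequent iterates because $f^+(0) = 0$ by \textbf{(H2)}. Thus some $J_{n_0}$ contains $[0, d_1]$, and the reduction above finishes the proof.

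The main technical obstacle is the bookkeeping across successive discontinuity crossings: the factor $\lambda/2$ per crossing could in principle be less than $1$, so one must verify that long stretches of clean expansion (factor $\lambda$) dominate the occasional shrinkages. However, because $f^+$ was engineered in Section \ref{pre} to satisfy exactly the same hypotheses \textbf{(H1)}--\textbf{(H5)} as the map $f$ of \cite{bmp}, and the \emph{leo} property is proved for $f$ in that paper, the same proof transfers verbatim to $f^+$.
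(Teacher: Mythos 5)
Your proposal is correct and ultimately takes the same route as the paper: the paper gives no independent proof of this theorem, justifying it only by the remark that $f^+$ is built to satisfy the same hypotheses \textbf{(H1)}--\textbf{(H5)} as the map $f$ of \cite{bmp}, for which the \emph{leo} property is proved there --- which is exactly your concluding reduction. Your direct sketch is a reasonable outline of that underlying argument (the anchoring at $0$ after a crossing of $d^+$ is indeed the key point), though note that crossings of $d_1$ and $d_2$ produce connected images only in the branched manifold $\mathcal{B}$, where $b$ and $1$ are identified, not in $[0,1]$.
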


\begin{clly}
\label{clly}
The periodic points of $f^+$ are dense in $\mathcal{B}$. If $x\in \mathcal{B}$, then

$$\mathcal{B}=Cl\left(\bigcup_{n\geq 0}(f^+)^{-n}(x)\right).$$

\end{clly}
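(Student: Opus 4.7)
The plan is to deduce both statements directly from the leo property of $f^+$. The second is immediate, and the first is the standard expansion-plus-intermediate-value argument, the only subtlety being the discontinuity of $f^+$ at $d^+$.

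For the preimage-density claim, fix $x\in\mathcal{B}$ and any nonempty open interval $J\subset\mathcal{B}$. Leo gives $n\geq 0$ with $(f^+)^n(J)=\mathcal{B}$, so $x\in(f^+)^n(J)$ and there exists $y\in J$ with $(f^+)^n(y)=x$, i.e., $y\in J\cap(f^+)^{-n}(x)$. Since $J$ is arbitrary, $\bigcup_{n\geq 0}(f^+)^{-n}(x)$ is dense in $\mathcal{B}$, giving the displayed equality.

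For density of periodic points, let $J\subset\mathcal{B}$ be an arbitrary open interval. I would pick a nonempty open subinterval $J_0\subset J$ with $\cl(J_0)\subset J$ lying inside a single branch of continuity of $(f^+)^N$; such $J_0$ exists because (H1)--(H4) force the discontinuity set of $(f^+)^N$ (the preimages of $d^+$ of order $<N$) to be finite. Hypothesis (H5) then makes $(f^+)^N|_{J_0}$ strictly monotone and $C^1$, with derivative of modulus at least $\lambda^N$. Choosing $N$ large, leo applied to $J_0$ also yields $(f^+)^N(J_0)=\mathcal{B}\supsetneq\cl(J_0)$. Writing $J_0=(a,b)$, monotonicity together with strict containment of $[a,b]$ in the image forces $h(t)=(f^+)^N(t)-t$ to take opposite signs near $a^+$ and $b^-$, and the intermediate value theorem delivers a $t_{\ast}\in J_0\subset J$ with $(f^+)^N(t_{\ast})=t_{\ast}$, a periodic point of $f^+$ inside $J$.

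The main obstacle is the compatibility of two constraints on $J_0$: it must be small enough to lie in a single continuity branch of $(f^+)^N$, yet large enough for $(f^+)^N(J_0)$ to exhaust $\mathcal{B}$. This is resolved by first shrinking $J$ to $J_0$ avoiding the finitely many discontinuities of iterates up to some preassigned order, invoking leo on $J_0$ to produce $N$, and, if necessary, shrinking $J_0$ once more to dodge the (still finitely many) additional break points of $(f^+)^N$; leo is preserved because any nonempty open subinterval of $\mathcal{B}$ eventually covers $\mathcal{B}$ under some iterate.
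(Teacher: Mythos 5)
The paper itself gives no proof of this corollary --- it is quoted from \cite{bmp} --- so your argument has to stand on its own. The second half does: deducing the density of $\bigcup_{n\geq 0}(f^+)^{-n}(x)$ from the leo property by pulling $x$ back into an arbitrary open interval is the standard one-line argument and is correct.

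The first half has a genuine gap. You ask for an open $J_0\subset J$ lying in a single continuity branch of $(f^+)^N$ \emph{and} satisfying $(f^+)^N(J_0)=\mathcal{B}$. These two requirements are incompatible: on a single branch $(f^+)^N$ is continuous and, by \textbf{(H5)}, strictly monotone, so the image of the open interval $J_0$ is again an open interval and can never equal the whole compact branched manifold $\mathcal{B}$. What your intermediate value step really needs is weaker --- one monotone branch of some iterate whose image contains the closure of its own domain --- but leo does not hand you that: it only says that the \emph{union} of the images of all the monotonicity branches of $(f^+)^N|_{J}$ covers $\mathcal{B}$, and it can perfectly well happen that no single branch covers its own domain interval (two branches whose images each contain the other's domain but not their own is the typical obstruction). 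Your proposed repair --- shrink $J_0$ to dodge the break points of $(f^+)^N$ and re-invoke leo --- is circular: the smaller interval needs a new, larger $N$, which creates new break points, and the loop never closes. A correct argument has to supply the missing branch explicitly, for instance by a covering-chain (Markov graph) argument among the finitely many branches, producing a cycle $J_{i_1}\to\cdots\to J_{i_r}\to J_{i_1}$ and hence a branch of $(f^+)^{rN}$ mapping a closed subinterval of $J$ over itself; or, more concretely for this map, by using \textbf{(H2)} and \textbf{(H4)}: the fixed point $f^+(0)=0$ has dense preimages (your correct second half), so $J$ contains a point sent to $0$ by some iterate, and pushing a one-sided neighbourhood of that point forward along the full branch $[0,d_1]\to[0,1]$ eventually yields a closed $K\subset J$ with $(f^+)^{N}|_{K}$ a monotone homeomorphism onto $[0,1]\supset K$, to which the intermediate value theorem applies. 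As written, the density of periodic points is not established.
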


\subsection{Two-dimensional map}
\label{map2}

We consider the twice punctured planar region $R$ depicted in Figure \ref{2}. 
It is formed by: two half-annuli $A$, $F$, and four rectangles $B,C,D,E$. 
There is a middle vertical line denoted by $l$. Note that $l$ defines a 
plane reflexion throughout denoted by $\theta$. We assume 
$\theta(D) = C$, $\theta(E) = B$ and $\theta(F) = A$. In particular,
$\theta(R) = R$ and $\theta(d^+) = d^-$, where the vertical segments $d^-, d^+$ correspond to the
right-hand and left-hand boundary curves of $B$ and $D$ respectively. We define
$H^{-} = A \cup B \cup C$ and $H^{+} = D \cup E \cup F$.

%
\begin{figure}[htv]
\begin{center}
\input{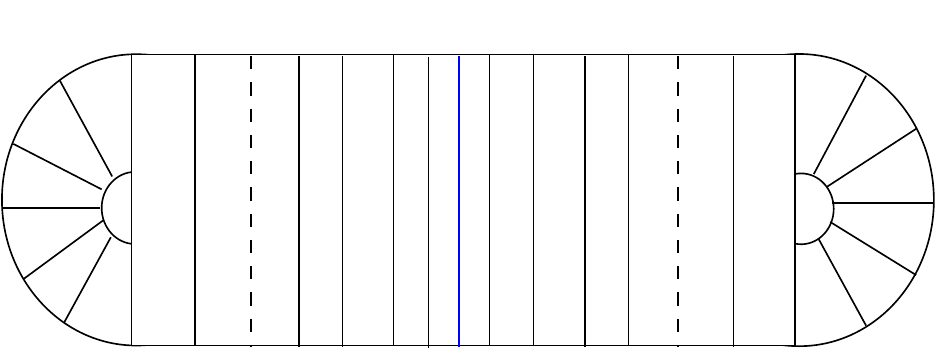_t}
\caption{\label{2}Region $R$.}
\end{center}
\end{figure}

In \cite{bmp} was defined the $C^{\infty}$ map $G:R\setminus\{d^-, d^+\}\to Int(R)$. It satisfies the following hypotheses:
 
\textbf{(G1)}: $G$ and $\theta$ commute, i.e., $G \circ\theta = \theta\circ G$.
 
\textbf{(G2)}: $G$ preserves and contracts the foliation $\mathcal{F}$.
 
\textbf{(G3)}: Let $g : K \setminus\{d^-, d^+\} \to K$ be the map induced by $G$ in the leaf space $K$.
 
Then, the map $f^+$ defined by $f^+ = g|_{\mathcal{B}^+}$ satisfies the hypotheses \textbf{(H1)-(H5)},
with $f = f^+$, $\mathcal{B} = \mathcal{B}^+$ and $d_{\ast} = d^+$.\\
 

 \textbf{Properties of G}
 \begin{itemize}
 \item  By \textbf{(G1)}, $H^{+}$ and $H^{-}$ are invariant under $G$.
 \item Since $G$ contracts $\mathcal{F}$ (\textbf{(G2)}) we have that $W^s(x,G)$ 
 is union of leaves of $\mathcal{F}$. It follows from \textbf{(G2)}, \textbf{(G3)} and the 
 expansiveness in \textbf{(H5)} that all periodic points of $G$ are hyperbolic saddles.
 \item By \textbf{(G1)} we have that $G(l)\subset l$ and so $G$ has a fixed point $P$ in $l$. Clearly one has $\pi(P) = 0$.
 \end{itemize}
 
 Define
 
 $$A_G^-=Cl\left(\bigcap_{n\geq 1} G^n(H^{-}) \right), \qquad A_G^+=Cl\left(\bigcap_{n\geq 1} G^n(H^{+}) \right).$$

 \begin{thm}
 $A_G^-$ and $A_G^+$ are homoclinic classes and $P\in A_G^+\cap A_G^-$.
 \end{thm}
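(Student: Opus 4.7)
The plan is to establish $P\in A_G^+\cap A_G^-$ directly from the fact that $P$ is a hyperbolic saddle lying on the common boundary of $H^+$ and $H^-$, and then to identify each of $A_G^\pm$ with the homoclinic class of a suitable hyperbolic periodic orbit inside it, exploiting the leo property of $f^+$ together with the foliated structure in (G2)-(G3).

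For $P\in A_G^+\cap A_G^-$: since $P\in l$ and $P$ is a hyperbolic saddle, the local unstable manifold $W^u_{loc}(P,G)$ is transverse to $l$ and thus has two branches, one lying in $H^+$ and one in $H^-$. By the $G$-invariance of $H^\pm$ (first bullet of Properties of $G$), the branch in $H^+$ is contained in $\bigcap_{n\geq 1}G^n(H^+)$; since it accumulates on $P$, we get $P\in A_G^+$, and symmetrically $P\in A_G^-$. To show $A_G^+$ is a homoclinic class, I would choose any periodic point $y$ of $f^+$ in the interior of $\mathcal{B}^+$ (it exists by Corollary \ref{clly}), lift it via the foliation to a hyperbolic periodic orbit $Q^+\subset H^+$ of $G$, and claim $A_G^+=H(Q^+)$.

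The claim has two ingredients. First, density of $Per(G)$ in $A_G^+$: given an open set $U\subset A_G^+$, its projection $\pi(U)$ to the leaf space contains a periodic point of $f^+$ by Corollary \ref{clly}, which lifts (using the contraction of $\mathcal{F}$ in (G2)) to a periodic point of $G$ meeting $U$. Second, every periodic $q\in A_G^+$ is homoclinically related to $Q^+$: by (G2), $W^s(q,G)$ is a union of leaves of $\mathcal{F}$, so $\pi(W^s(q,G))\supset\bigcup_{n\geq 0}(f^+)^{-n}(\pi(q))$ is dense in $\mathcal{B}^+$ by Corollary \ref{clly}; since the unstable bundle of $Q^+$ is uniformly transverse to $\mathcal{F}$, the branches of $W^u(Q^+,G)$ cross $W^s(q,G)$ transversely. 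Reversing the roles of $q$ and $Q^+$ yields the other transverse heteroclinic point, and the inclination lemma then gives $q\in H(Q^+)$. Combining both ingredients yields $A_G^+\subset H(Q^+)$, and the reverse inclusion follows from $Q^+\in A_G^+$ and the closed $G$-invariance of $A_G^+$. The statement for $A_G^-$ is then immediate by applying $\theta$, using (G1) together with $\theta(H^+)=H^-$, giving $A_G^-=H(\theta(Q^+))$.

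The main obstacle is the transversality step, specifically showing that $\pi(W^u(q,G))$ is dense in $\mathcal{B}^+$ for every periodic $q\in A_G^+$ (needed for the reverse heteroclinic intersection) and that the resulting set-theoretic intersections between leaves of $\mathcal{F}$ in $W^s(Q^+,G)$ and arcs of $W^u(q,G)$ actually occur at interior points of $R$ and not at the punctures $d^\pm$. Both points are handled by combining the leo hypothesis (H5) with the uniform domination of $\mathcal{F}$ by the expanded direction; once these are in place, the homoclinic class identification reduces to a standard inclination-lemma argument.
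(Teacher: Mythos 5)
The paper offers no proof of this theorem: it is quoted from \cite{bmp} without argument, so there is nothing in the text to compare your write-up against. Your architecture is the standard one for these Plykin-type constructions and is essentially the argument of \cite{bmp}: $P\in A_G^+\cap A_G^-$ is immediate (your unstable-branch argument works, and even more simply $P$ is fixed and lies on $l\subset H^+\cap H^-$); the mutual transverse heteroclinic intersections follow from the density of $\bigcup_{n\geq 0}(f^+)^{-n}(\pi(q))$ in the leaf space (Corollary \ref{clly}) combined with the fact that unstable arcs are transverse to $\mathcal{F}$ and hence project onto nonempty open sets (you do not actually need density of $\pi(W^u(q,G))$, only nonempty interior, so the ``main obstacle'' you flag is lighter than you fear); and the inclination lemma then gives $H(q)=H(Q^+)$. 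The reduction of the $A_G^-$ case via $\theta$ and \textbf{(G1)} is fine.

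The step that does not work as written is the density of $Per(G)$ in $A_G^+$. Given an open $U\subset A_G^+$ you find a periodic point $y$ of $f^+$ in $\pi(U)$ and ``lift'' it to a periodic point of $G$ ``meeting $U$''. The lift is the unique fixed point of the leaf contraction $G^{k}|_{L_y}$ (with $k$ the period of $y$); it lies on the correct leaf $L_y$, but nothing controls its position \emph{along} $L_y$, while $U$ may meet $L_y$ only in a small set far from that fixed point. So the lifted periodic point need not meet $U$, and the density claim is not established. The clean repair is to reverse the logic: first prove $A_G^+=Cl\left(W^u(Q^+,G)\right)$ by observing that $\pi\left(W^u(Q^+,G)\right)$ is a forward-invariant set with nonempty interior, hence all of the leaf space by \emph{leo}, and that for $x\in\bigcap_{n\geq1}G^n(H^+)$ with backward orbit $(x_{-n})$ in $H^+$, any $w_n\in W^u(Q^+,G)\cap L_{\pi(x_{-n})}$ satisfies $d(G^n(w_n),x)\to 0$ by the uniform contraction of $\mathcal{F}$; combined with your transversality step this gives $A_G^+=H(Q^+)$ directly, and density of periodic points then comes out of Birkhoff--Smale instead of going in as an input. (Alternatively, establish local product structure for the hyperbolic set $\bigcap_{n}G^n(H^+)$ and invoke shadowing.) A second, smaller omission: $A_G^+$ is a \emph{closure}, and the points added by it (the forward images of the deleted leaf $d^+$) must also be reached; this requires the accumulation-on-$d^+$ argument that the paper itself uses in Proposition \ref{prop1}, which your sketch should incorporate.
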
 

\subsection{Modified two-dimensional map}
\label{pre1}


For the region $R$ in Figure \ref{2}, 
we define the $C^{\infty}$ map $H:R\setminus\{d^-, d^+\}\to Int(R)$ 
in a way that its image is as indicated in Figure \ref{7}. 
We require the following hypotheses:

\textbf{(L1)}: $H^-$, $H^+$ are invariant under $H$. $H(H^-\setminus\{d^{-}\})\subset H^-$ and $H(H^-\setminus\{d^{+}\})\subset H^+$.

\textbf{(L2)}: $H$ preserves and contracts the foliation $\mathcal{F}$.

\textbf{(L3)}: Let $h : K \setminus\{d^-, d^+\} \to K$ be the map induced by $H$ in the leaf space $K$.

Then, the map $f^{+(-)}$ defined by $f^{+(-)} = h|_{\mathcal{B}^{+(-)}}$ satisfies the hypotheses \textbf{(H1)-(H5)},
$\mathcal{B} = \mathcal{B}^{+(-)}$ and $d_{\ast} = d^{+(-)}$.

\begin{figure}[htv]
\begin{center}
\input{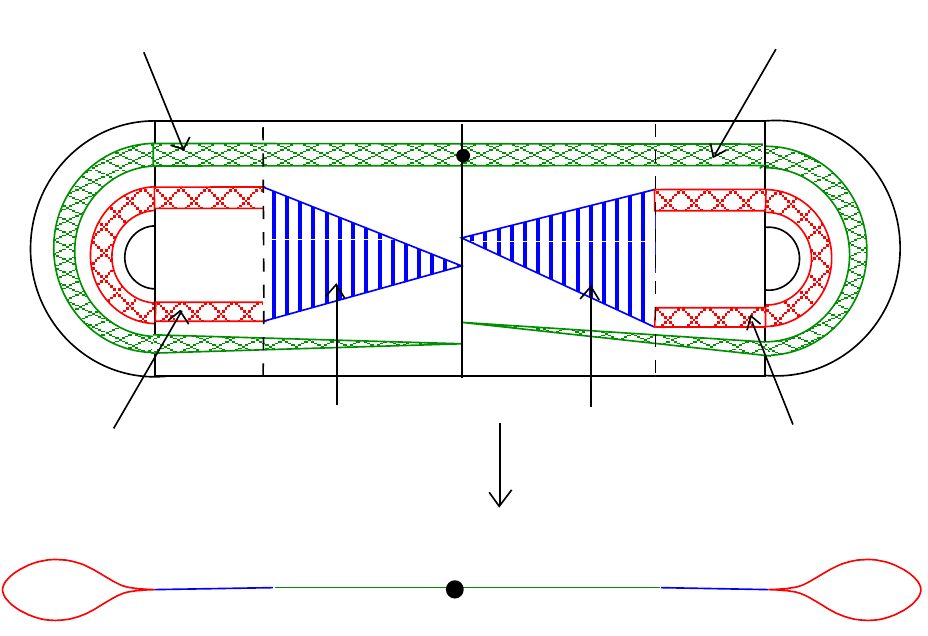_t}
\caption{\label{7} The quotient space and modified two-dimensional map.}
\end{center}
\end{figure}

We observe that \textbf{(L1)} implies $H(l)\subset l$ and by contraction, $H$ has a fixed point $P\in l$. Again, for

$$A_H^-=Cl\left(\bigcap_{n\geq 1} H^n(H^{-}) \right), \qquad A_H^+=Cl\left(\bigcap_{n\geq 1} H^n(H^{+}) \right)$$

we have that $A_H^+$ and $A_H^-$ are homoclinic classes and $\{P\}= A_H^+\cap A_H^-$.

\pagebreak
\newpage

\subsection{Venice mask with one singularity}

Recall, by considering the original maps (Subsection \ref{map1}, \ref{map2}), 
and by using the plugs \ref{c4}, \ref{5}, in \cite{bmp} was construct the 
venice mask example with one singularity. Here, we provides a graphic idea 
in order to compare it with the new examples. 

The Figure \ref{ot} $a)$ shows the flow, 
whereas the Figure \ref{ot} $b)$ shows the ambient manifold that supports this one.
The ambient manifold is a solid bi-torus excluding two tori neighborhoods $V_1,V_2$ associated to
two repelling periodic orbits $O_1,O_2$ respectively.

\begin{figure}[htv]
\begin{center}
\input{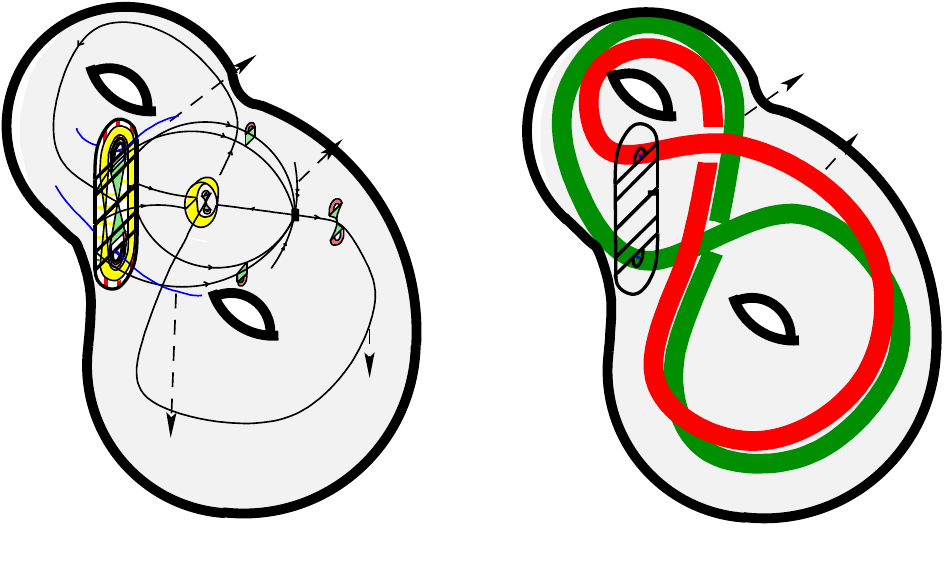_t}
\caption{\label{ot}Venice mask with one singularity}
\end{center}
\end{figure}
\pagebreak
\newpage

\section{Venice mask's examples with two-singularities}

\subsection{Vector field $X$ and Example $1$.}
\label{exm1}

In this section, we construct a vector field 
$X$ which will satisfy the properties in the Theorem \ref{thF} by using
the subsection \ref{pre} and \ref{pre1}. 

We begin by considering a vector field as the Cherry flow 
described in Figure \ref{4}, with the same conditions of subsection \ref{pre}.

We called this flow of $A$ and we proceed to perturbe it, following the ideas of 
the well known DA-Attractor introduced by Smale (see \cite{rob}). 
Let $U$ be a neighborhood (relatively small) 
of $\sigma$. We can obtain a flow $\varphi^t$ such that 
$supp(\varphi^t-id)\subset U$ (Figure \ref{8} $a)$). Also, the derivate of the flow at 
$\sigma$ with respect to canonical basis in $T_{\sigma}Q$ is

\begin{align*}
D\varphi_{\sigma}^t=
\left(
\begin{array}{cc}
1 & 0 \\
0 & e^t
\end{array}
\right).
\end{align*}

We deform such a flow in order to obtain a one-parameter family of 
flows $B^t=\varphi^t\circ A$. Let $\tau>0$ such that $e^{\tau}\lambda_s>1$, 
so $\sigma$ is a source for $B^{\tau}$. Moreover, the new map has three 
fixed points on $W_X^s(\sigma)$, $\sigma$ a source and $\sigma_1$, $\sigma_2$ 
saddles. Moreover, there exists a neighborhood $V$ of $\sigma$ (not 
containing $\sigma_1$ and $\sigma_2$) contained in $U$ such 
that $B^{\tau}_s(V)\supset V$ for all $s>0$ (Figure \ref{8} $b)$).
Thus, we obtain a vector field as the square $Q$ whose flow $A$ is
described in Figure \ref{8}.

%
\begin{figure}[htv]
\begin{center}
\input{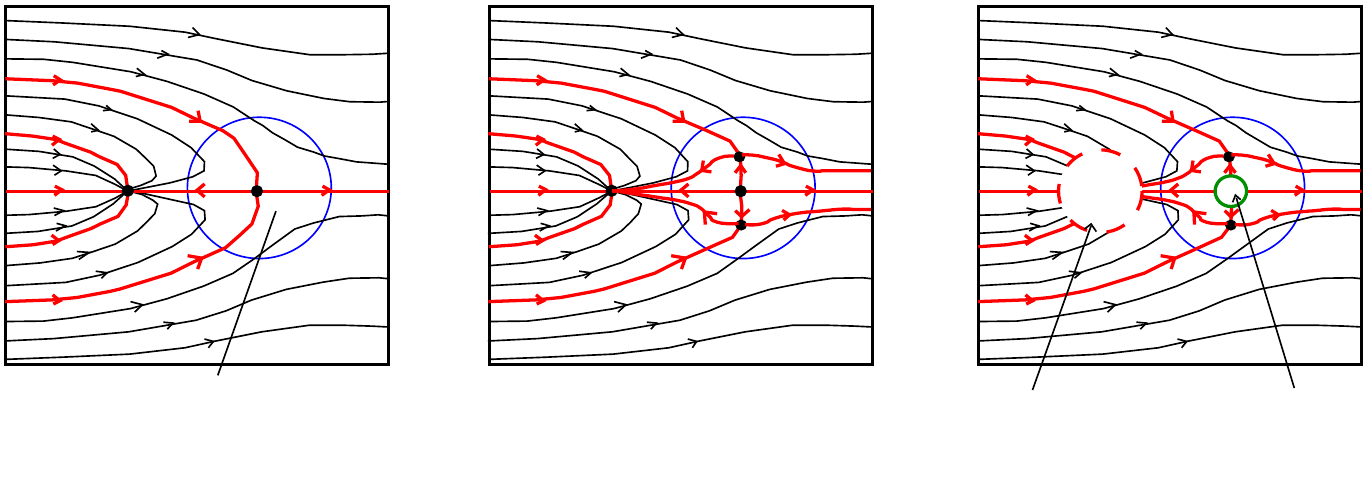_t}
\caption{\label{8} Perturbed Cherry flow}
\end{center}
\end{figure}

Now, we remove two small disks $D_1$, $D_2=V$ centered at the 
attracting equilibrium $p$ and at the repelling equilibrium $\sigma$ 
respectively (Figure \ref{8} $c)$).


In the next step, we multiply the above vector field by a strong 
contraction $\lambda_{ss}$ in order to obtain the similar vector field 
described in Figure \ref{c4} $b)$. We choose 
$\lambda_{ss}$ such that $\sigma_1$ and $\sigma_2$ are Lorenz-like.

Now, we consider an interval $I_0=I_1 \times  \{p_0\}$, where $p_0$ is the 
point of intersection between $W_X^u(\sigma)$ and the disk 
$D_1$. We realize a modification in the flow 
such that a branched of $W_X^u(\sigma_1)$ intersects a connected 
component of $I_0\setminus\{p_0\}$ and a branched of $W_X^u(\sigma_2)$ 
intersects the other connected component of $I_0\setminus\{p_0\}$ (See \ref{px}).

The final step is to glue two handles on the $3$-dimensional 
vector field above in order to obtain the vector field whose 
flow is given in Figure \ref{px} $a)$. The resulting vector field
is what we shall call Plug $X$.

\begin{figure}[htv]
\begin{center}
\input{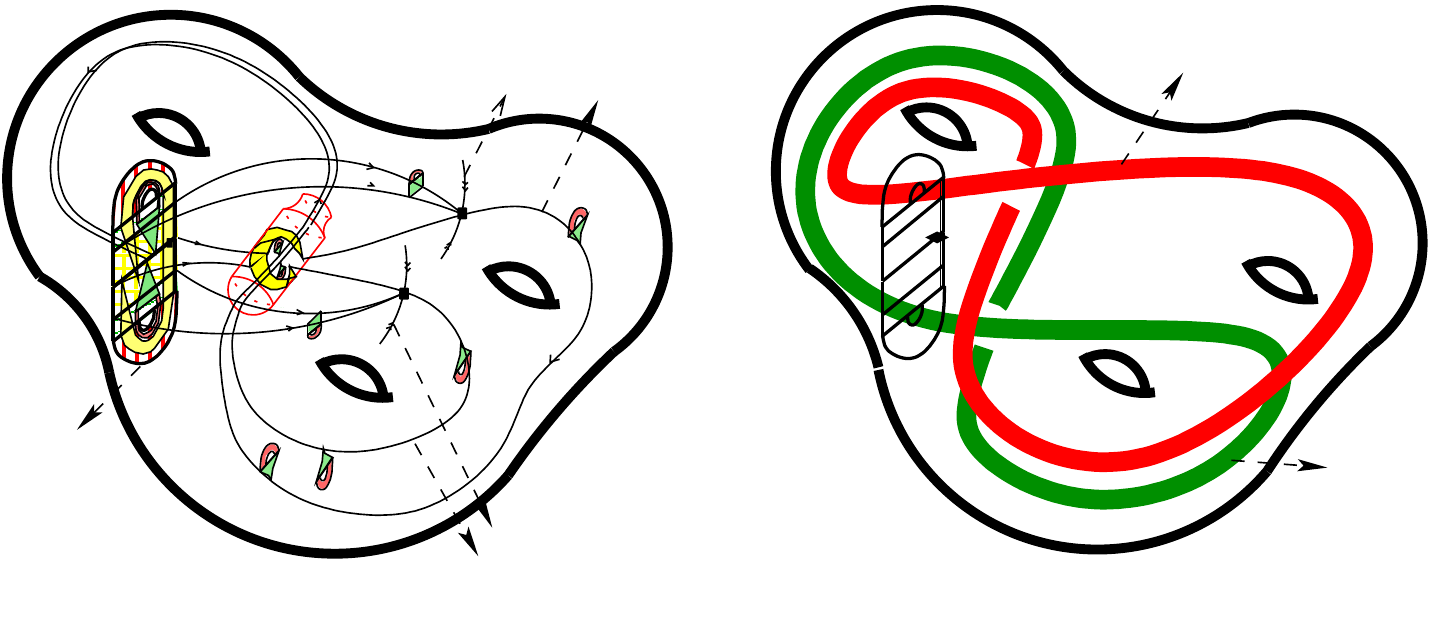_t}
\caption{\label{px} Plug $X$ and its associated manifold.}
\end{center}
\end{figure}

In the same way as in Figure \ref{c4}, in this case, by multiplying the above vector field by a strong 
contraction generate two holes and it is nothing but the disks $D_1$ times a compact interval 
$I_1$, and $D_2$ times a compact interval $I_2$. Also, let us to use the Plug \ref{5} and 
apply on the hole associated to $D_1$. Note that the interval $I_2$ is chosen such that 
$D_2\times I_2$ produces the third hole on the ambient manifold. It generates a solid tritorus
(see Figure \ref{px} $b)$).

Then, we construct a vector field $X$ on a solid tritorus $ST_1$ in a
way that $X_t(ST_1)\subset Int(ST_1)$ for all $t > 0$ and $X$ is transverse to the boundary 
of the solid tritorus. The flow is obtained gluing plugs $X$ and \ref{5} as indicated in
Figure \ref{px} $a)$.

We require the following hypotheses:

\textbf{(X1)}: There are two repelling periodic orbits $O_1$, $O_2$ in $Int(ST_1)$ crossing the
holes of $R$.

\textbf{(X2)}: There are two solid tori neighborhoods $V_1, V_2 \subset Int(ST_1)$ of $O_1,O_2$ with
boundaries transverse to $X_{t}$ such that if $M = ST_1 \setminus (V_1 \cup V_2)$, then $M$ is a
compact neighborhood with smooth boundary transverse to $X_{t}$ and $X_{t}(M)\subset
M$ for $t>0$. As $M$ is a solid tritorus with two solid tori removed, we have that
$M$ is connected as indicated in Figure \ref{px} $b)$.

\textbf{(X3)}: $R \subset M$ and the return map $H$ induced by $X$ in $R$ satisfies the properties
\textbf{(L1)-(L3)} in Section \ref{pre1}. Moreover,

$$\{q\in M : X_{t}(q) \notin R, \forall t\in \re\} = \{\sigma_1,\sigma_2\}.$$

\bigskip

Now, define
$$A^+=Cl\left(\bigcup_{t\in\re} X_{t}(A^+_H)\right)\qquad\text{and}\qquad A^-=Cl\left(\bigcup_{t\in\re} X_{t}(A^-_H)\right).$$

\bigskip

\begin{prop}
\label{prop1}
$W_X^u(\sigma_1)\subset A^+$ and $W_X^u(\sigma_2)\subset A^-$.
\end{prop}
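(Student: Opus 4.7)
The plan is to show that the two branches of $W_X^u(\sigma_1)$, together with $\sigma_1$ itself, all lie in the closed $X_t$-invariant set $A^+$; the argument for $W_X^u(\sigma_2)\subset A^-$ is identical after swapping $H^+$ with $H^-$ and $d^+$ with $d^-$.

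First, since $\sigma_1$ is Lorenz-like, $\dim E^u_{\sigma_1}=1$, so $W_X^u(\sigma_1)=\{\sigma_1\}\cup\gamma_1^+\cup\gamma_1^-$, where $\gamma_1^\pm$ are the two branches, each a single full $X$-orbit accumulating on $\sigma_1$ as $t\to-\infty$. By the construction of Plug $X$ in Section \ref{exm1} together with hypothesis (X3), $W_X^s(\sigma_1)\cap R$ is the vertical segment $d^+$, and the first hit of each branch $\gamma_1^\pm$ on $R$ is a point $x_1^\pm\in H^+$. Consequently, once I show $x_1^\pm\in A_H^+$, the $X_t$-invariance and closedness of $A^+$ will give $\gamma_1^\pm=\{X_t(x_1^\pm):t\in\re\}\subset A^+$, and then $\sigma_1=\lim_{t\to-\infty}X_t(x_1^\pm)\in A^+$ as well, so $W_X^u(\sigma_1)\subset A^+$.

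The bulk of the work thus reduces to proving $x_1^\pm\in A_H^+$. My strategy: by Corollary \ref{clly} applied to $f^+$ and hypothesis (L3), periodic points of $H$ project to dense periodic points of $f^+$ in $\mathcal{B}^+$, hence periodic points of $H$ in $A_H^+\cap H^+$ accumulate on $d^+$ from each of its two sides. Pick a sequence $p_n\in A_H^+\cap H^+$ approaching $d^+$ from the side associated to $x_1^\pm$. Local linearization at the Lorenz-like singularity $\sigma_1$ shows that the restriction of $H$ to a one-sided neighborhood of $d^+$ extends continuously to $d^+$ with limit exactly $x_1^\pm$: orbits starting close to $W_X^s(\sigma_1)$ shadow $\sigma_1$ for a long time and then exit inside a thin tube around $\gamma_1^\pm$, cutting $R$ arbitrarily close to $x_1^\pm$. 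Therefore $H(p_n)\to x_1^\pm$; since $H(p_n)\in A_H^+$ (by $H$-invariance of $A_H^+$) and $A_H^+$ is closed, it follows that $x_1^\pm\in A_H^+\subset A^+$.

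The main obstacle I foresee is the one-sided continuity step in the previous paragraph, i.e.\ verifying that the return map $H$ built in Section \ref{pre1} really does extend at $d^\pm$ to the first-hit points $x_i^\pm$ of the corresponding unstable branches in the full three-dimensional flow. The geometric picture is standard for Lorenz-type models, but here it has to be reconciled with the specific piecewise definition of $H$ and with the perturbed-Cherry-box construction of Plug $X$ in Section \ref{exm1}; everything else in the argument is essentially a packaging of invariance, closedness, and the density provided by Corollary \ref{clly}.
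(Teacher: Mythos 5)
Your proposal is correct and follows essentially the same route as the paper: both arguments rest on the density of periodic points of $H$ near $d^{+}$ (via Corollary \ref{clly}), the inclusion $d^{+}\subset W_X^s(\sigma_1)$ forcing orbits of those periodic points to accumulate on $W_X^u(\sigma_1)$ after passing near the singularity, and the closedness and invariance of $A^{+}$. The only difference is one of packaging: you make explicit the one-sided continuity of the return map at $d^{+}$ and the first-hit points $x_1^{\pm}$, a step the paper's proof leaves implicit in the phrase that the orbits ``also accumulate on $W_X^u(\sigma_1)$.''
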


\begin{proof}
If $x\in H^+$ is a periodic point of $H$, then $G^n(x)\in R$ for all $n\leq 0$ and so
$x\in A_H^+ = Cl(\bigcap_{n\geq1} H^n(H^+))$. Therefore $x\in A^+$ (for $A_{H^+}\subset A^+$)
and by invariance of $A^{+}$, the full orbit of $x$ is contained in $A^+$. 

Second, the periodic points of $f^+$ in \textbf{(L3)} 
are dense in $\mathcal{B}$ by Corollary \ref{clly}. Then,
the periodic points of $H$ accumulate on $d^+$ in both 
connected components of $H^+\setminus d^+$. Since $d^+$ is contained 
in $W_X^s(\sigma_1)$, the full $X_{t}$-orbit of the periodic points of $H$ 
accumulating $d^+$ also accumulate on $W_X^u(\sigma_1)$. 
Then $W_X^u(\sigma_1)\subset A^+$ because $A^+$ is closed. Analogously, we have $W_X^u(\sigma_2)\subset A^-$.
\end{proof}

Define $A_H= A_H^+ \cup A_H^-$ and 

$$A=Cl\left(\bigcup_{t\in\re} X_{t}(A_H)\right),$$

\begin{lemma}
\label{homcla}
$A^+$ and $A^-$ are homoclinic classes of $X$ and  $A=A^+\cup A^-$.
\end{lemma}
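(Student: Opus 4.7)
The plan is to realise each $A^\pm$ as the $X$-homoclinic class of a suitable hyperbolic periodic orbit obtained by suspending a periodic point of $H$, and then to deduce the identity $A=A^+\cup A^-$ directly from set theory and the fact that closure commutes with finite unions.

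\textbf{Step 1 (Choice of a periodic orbit).} By the theorem stated at the end of Subsection \ref{pre1}, $A_H^+$ is a homoclinic class of $H$, so there is a hyperbolic periodic point $p\in H^+$ of $H$ with $A_H^+=H_H(p)$, where $H_H(p)$ denotes the closure of the transverse homoclinic points of $p$ for the return map $H$. Since $R$ is a cross-section of $X$ (properties \textbf{(X2)}--\textbf{(X3)}), the $X$-orbit of $p$ is a hyperbolic periodic orbit $O=O(p)$ of $X$. The same recipe applied in $H^-$ produces a hyperbolic periodic orbit $O'$ with $A_H^-=H_H(p')$.

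\textbf{Step 2 ($A^+\subseteq H_X(O)$).} I would use the standard correspondence between transversality for the return map and for the flow: if $q\in R$ is a transverse homoclinic point of $p$ for $H$, then $q\in W^s_X(O)\cap W^u_X(O)$, and the transversality $T_qW^s_H(p)+T_qW^u_H(p)=T_qR$ in $R$, together with the extra direction $E^X_q$ transverse to $R$, yields $T_qW^s_X(O)+T_qW^u_X(O)=T_qM$. Hence the whole $X$-orbit of $q$ is a transverse homoclinic orbit of $O$, and taking the closure of the $X$-saturation of the dense set of such points in $A_H^+$ gives $A^+=\mathrm{Cl}\bigl(\bigcup_{t\in\re}X_t(A_H^+)\bigr)\subseteq H_X(O)$.

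\textbf{Step 3 ($H_X(O)\subseteq A^+$).} Conversely, let $\gamma$ be any transverse homoclinic orbit of $O$ for $X$. By \textbf{(X3)} the only $X$-orbits that never meet $R$ are the singularities $\sigma_1,\sigma_2$, and $\gamma$ is not a singular orbit, so $\gamma\cap R\neq\emptyset$. The same tangent-space decomposition as in Step 2, run backwards, shows that each point of $\gamma\cap R$ is a transverse homoclinic point of $p$ for $H$; hence $\gamma\cap R\subseteq A_H^+$ and $\gamma=\bigcup_t X_t(\gamma\cap R)\subseteq\bigcup_t X_t(A_H^+)$. Closing up, $H_X(O)\subseteq A^+$, which together with Step 2 gives $A^+=H_X(O)$. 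The same argument with $O'$ and $A_H^-$ gives $A^-=H_X(O')$.

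\textbf{Step 4 ($A=A^+\cup A^-$).} Since $A_H=A_H^+\cup A_H^-$, the $X$-saturation satisfies $\bigcup_{t\in\re}X_t(A_H)=\bigcup_{t\in\re}X_t(A_H^+)\cup\bigcup_{t\in\re}X_t(A_H^-)$, and closure commutes with finite unions, so $A=A^+\cup A^-$.

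The main obstacle will be Step 3: one must rule out transverse homoclinic orbits of $O$ that ``live outside $R$'', and this is precisely what property \textbf{(X3)} of the plug is designed for; the remaining routine point is checking that the transversality in the $2$-dimensional section transfers to full transversality in $M$ through the flow direction $E^X$, which is automatic because $R$ is transverse to $X$.
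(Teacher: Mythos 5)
Your argument is correct and is precisely the standard suspension correspondence one expects here; note that the paper itself gives no proof of Lemma \ref{homcla}, deferring entirely to \cite{bmp}, where the analogous statement is obtained by the same reduction of flow-homoclinic data to return-map-homoclinic data on the cross-section $R$. Your Steps 2--3 supply exactly the details the citation hides, namely the transfer of transversality through the flow direction $E^X$ and the use of \textbf{(X3)} to force every transverse homoclinic orbit of $O$ to meet $R$ away from $d^{\pm}$.
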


\bigskip

\begin{proof}
See \cite{bmp}.
\end{proof}

\bigskip

\begin{prop}
\label{secans}
$X$ is a sectional Anosov flow.
\end{prop}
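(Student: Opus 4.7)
The plan is to verify the conditions of Definitions \ref{d2} and \ref{shs} for the maximal invariant set $M(X) = \bigcap_{t \geq 0} X_t(ST_1 \setminus (V_1 \cup V_2))$, by importing the hyperbolic structure of the return map $H$ on $R$ to the suspension and by using the Lorenz-like data at the singularities where $H$ is undefined.

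First I would identify, via \textbf{(X3)}, that every point of $M(X)$ either lies on an orbit meeting $R$ or belongs to $\{\sigma_1,\sigma_2\}$. On the cross-section I propose the splitting $T_p R = E^s_p \oplus \ce X(p)\cd^{\perp, c}$ where $E^s_p$ is the tangent line of the foliation $\mathcal{F}$ at $p$ and the complement projects isomorphically onto the leaf space $K$. Transporting by $DX_t$ and adding $\ce X(p)\cd$ to the complement yields a $DX_t$-invariant continuous splitting $TM(X) = E^s \oplus E^c$ with $\dim E^c = 2$ away from the equilibria. At $\sigma_i$, which is Lorenz-like by construction, I define $E^s_{\sigma_i}$ to be the $\lambda^{ss}_i$-eigenline and $E^c_{\sigma_i}$ to be the $(\lambda^s_i,\lambda^u_i)$-invariant plane; continuity at $\sigma_i$ is immediate from the linearization and the choice of the strong contraction $\lambda_{ss}$ in the planar-to-3D step.

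The three estimates I would then check are: (i) contraction of $E^s$, which on $R$ follows from \textbf{(L2)} (uniform contraction of $\mathcal{F}$ by $H$) composed with exponential contraction by the factor $e^{\lambda_{ss} t}$ between consecutive returns, and at $\sigma_i$ from $\lambda^{ss}_i<0$; (ii) domination of $E^s$ over $E^c$, which on $R$ amounts to comparing the contraction rate of $H$ along leaves with the expansion $|h'|\geq\mu>1$ of the quotient map $h$ from \textbf{(L3)} together with \textbf{(H5)}, and at $\sigma_i$ to $\lambda^{ss}_i < \lambda^s_i$; (iii) sectional expansion on $E^c$, which reduces to bounding below the Jacobian of $DX_t|_{E^c}$. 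Between returns to $R$ the Jacobian is the product of the expansion of $h$ (in the quotient direction) by the unit-norm time factor along the flow direction, hence grows at rate $\mu$; at $\sigma_i$ the Jacobian on the invariant plane equals $e^{(\lambda^s_i+\lambda^u_i)t}$, and the Lorenz-like inequality $-\lambda^s_i<\lambda^u_i$ yields $\lambda^s_i+\lambda^u_i>0$. The hyperbolicity of the two equilibria is part of the Lorenz-like assumption, so Definition \ref{shs} is satisfied.

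The main obstacle is the uniform control of (ii) and (iii) along orbits that pass arbitrarily close to $\sigma_1$ or $\sigma_2$, since the return time to $R$ blows up there and the instantaneous expansion of $h$ degenerates near $d^{\pm}$. The standard fix is to linearize $X$ in a neighborhood of each $\sigma_i$ (available by hyperbolicity) and to trade the diverging return time for the eigenvalue estimates $\lambda^s_i+\lambda^u_i>0$ and $\lambda^{ss}_i<\lambda^s_i$, showing that the contribution over one excursion through the linearized box plus one regular arc back to $R$ exceeds the bound needed for domination and sectional expansion. Because \textbf{(L1)} makes $H^+$ and $H^-$ separately invariant with each $\sigma_i$ attached to only one of them, the excursions near $\sigma_1$ and $\sigma_2$ can be analyzed independently, and the one-singularity argument used in \cite{bmp} applies verbatim on each side.
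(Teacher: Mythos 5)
Your proof is correct in outline but follows a genuinely different route from the paper's. You verify Definitions \ref{d2} and \ref{shs} directly: you build the splitting on the cross-section $R$ from the contracted foliation $\mathcal{F}$ and the leaf-space expansion, transport it by the flow, patch in the Lorenz-like eigendata at $\sigma_1,\sigma_2$, and then fight the real battle --- uniform domination and sectional expansion along orbits with long excursions near the equilibria --- by linearizing and trading divergent return times against the inequalities $\lambda^{ss}<\lambda^{s}$ and $\lambda^{s}+\lambda^{u}>0$. The paper avoids all of this pointwise analysis: it only checks that $A$ has dense periodic orbits (being a union of homoclinic classes), that each periodic orbit in $A$ carries a hyperbolic splitting $E^s\oplus E^X\oplus E^u$ coming from \textbf{(L2)}--\textbf{(L3)}, and that hyperbolicity of saddle type persists for periodic orbits of every $C^1$-close vector field; it then sets $F^s=E^s$ and $F^c=E^X\oplus E^u$ over $Per(A)$ and invokes the extension machinery of \cite{mpp2} to propagate this splitting to a sectional-hyperbolic splitting over the closure $A=Cl(Per(A))$. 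Your approach is more self-contained and makes the expansion/contraction mechanism visible, at the cost of the delicate uniform estimates near the singularities (precisely the content you defer to ``the one-singularity argument of \cite{bmp} applies verbatim on each side''); the paper's approach is softer and shorter but leans entirely on the robustness-implies-partial-hyperbolicity theorem of \cite{mpp2}. One small remark: the proposition as used in the proof of Theorem \ref{thF} also needs $M(X)=A$, and your opening reduction via \textbf{(X3)} addresses that identification more explicitly than the paper's written proof does.
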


\begin{proof}

In the same way of \cite{bmp}, we will prove that $A$ is a sectional-hyperbolic set and 
$M(X)=A$. Indeed, how $A=A_1\cup A_2$ is union of homoclinic 
classes then $A$ has dense periodic orbits (Birkhoff-Smale Theorem). 
Moreover, of the hypotheses \textbf{(L2)} and \textbf{(L3)} 
follows that every periodic orbit of $X$ contained in $A$ has a 
hyperbolic splitting $T_OM=E_O^s\oplus E_O^X\oplus E_O^u$. 
Here, $E_O^s$ is due to \textbf{(L2)}, $E_O^u$ by \textbf{(L3)} 
and $E_O^X$ is the one-dimensional subbundle over $O$ induced by $X$. 
Let $Per(A)$ be the union of the periodic orbits of $X$ 
contained in $A$. Define the splitting

$$T_{Per(A)}M=F^s_{Per(A)}\oplus F^c_{Per(A)},$$

where $F^s_x=E^s_x$ and $F^c_x=E^X_x\oplus E^u_x$ for $x\in Per(A)$. As every periodic orbit in $M$
of every vector field $C^1$ close to $X$ is hyperbolic of saddle type, we can use the
arguments in \cite{mpp2} to prove that the splitting $T_{Per(A)}M = F^s_{Per(A)}\oplus F^c_{Per(A)}$ over
$Per(A)$ extends to a sectional-hyperbolic splitting $T_AM = F^s_A\oplus F^c_A$
over the whole $A = Cl(Per(A))$.

We conclude that $X$  is a sectional Anosov flow on $M$.

\end{proof}

\bigskip

\textbf{Proof of Theorem \ref{thF}}.

By using the Lemma \ref{homcla} and the Proposition \ref{secans} we have
that $X$ is a sectional Anosov flow and $M(X)$ is the union of two
homoclinic classes $H_X^1, H_X^2$, where $H_X^1=A^+$ and $H_X^2=A^-$.
Since $\{P\} = A^+_H\cap A^-_H$, it implies that  $H_X^1\cap H_X^2=O$, with $O$
the orbit associated to $P$. In particular $X$ is a Venice mask,
and by construction it has two singularities.

\subsection{Vector field $Y$ and Example $2$.}
\label{exm2}

In this section, we construct a vector field 
$Y$ which will satisfy the properties in the Theorem \ref{thG} by using
the results from \cite{bmp}. 

Firstly, in order to obtain the vector field $Y$, we begin by considering
the venice mask with one singularity. Unlike the previous section, in this case 
we will not perturb the flow. Moreover, we will change the flow by preserving the plugs 
\ref{c4}, \ref{5} and  we will remove a connected component of the flow 
and its ambient manifold.


The main aim of remove a connected component will be 
glue a new plug with different features, properties and 
that provides other singularity. This process is done in simple steps.
(see Figure \ref{steps}). Indeed, the important steps are Figure \ref{steps} $c)$, $d)$ 
and since we want a plug by containing a singularity, 
we will see that the this one has a hole,  which is produced by the singularity.

\begin{figure}[htv]
\begin{center}
\input{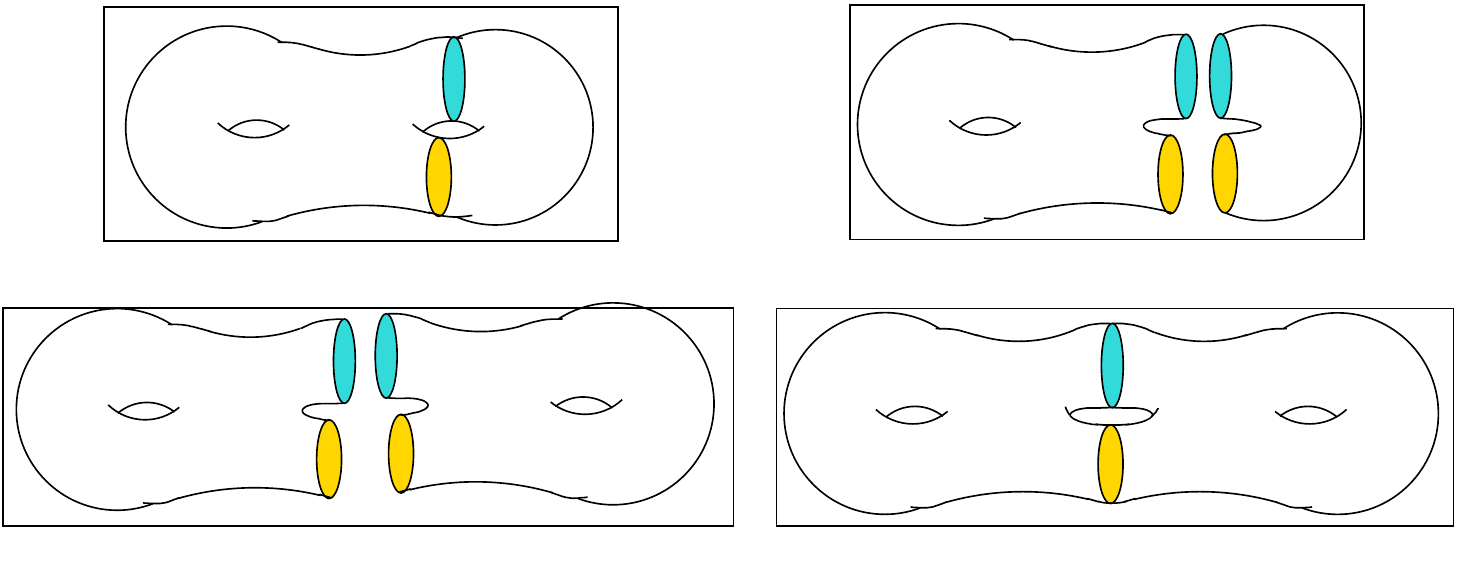_t}
\caption{\label{steps} Steps by gluing the new plug.}
\end{center}
\end{figure}

\subsection{Flow through of the faces}

We begin by considering the plug \ref{c4} described in 
Figure \ref{c4} with the same conditions of subsections \ref{map1}, \ref{map2}.

For this purpose we need to observe with detail the flow behavior 
through of the faces removed. Indeed, we observe the vector field in the square whose flow is
described in Figure \ref{c4}.

Thus, it will be constructed the new plug through 
two steps. Firstly, we will be depicted a circle that represents the face $1$ on the Cherry flow 
and let us to study the flow behavior. It should be noted that this vector field exhibits two 
leaves which belong to the region $R$ and converge to the singularity, i.e., the region 
$R$ exhibits two singular leaves. Note that these leaves are crossing outward to the face $1$. 
In addition, note that there are trajectories crossing inward to the face $1$ too, such as 
the branch unstable manifold of the singularity. 
This shows that extensive analysis is necessary for understand the flow behavior to the face $1$.

We can observe that the top and bottom region 
of the singular leaves saturated by the flow are crossing through the face $1$, 
i.e., the flow is pointing outward of the face $1$.

By studying the complement of these regions, we have that the  
behavior of the leaves is depicted as Figure \ref{f1}. Here, 
this region exhibits two tangent leaves, whereas the other 
leaves intersect the region twice, i.e., the other leaves 
cross and return. 

\begin{figure}[htv]
\begin{center}
\input{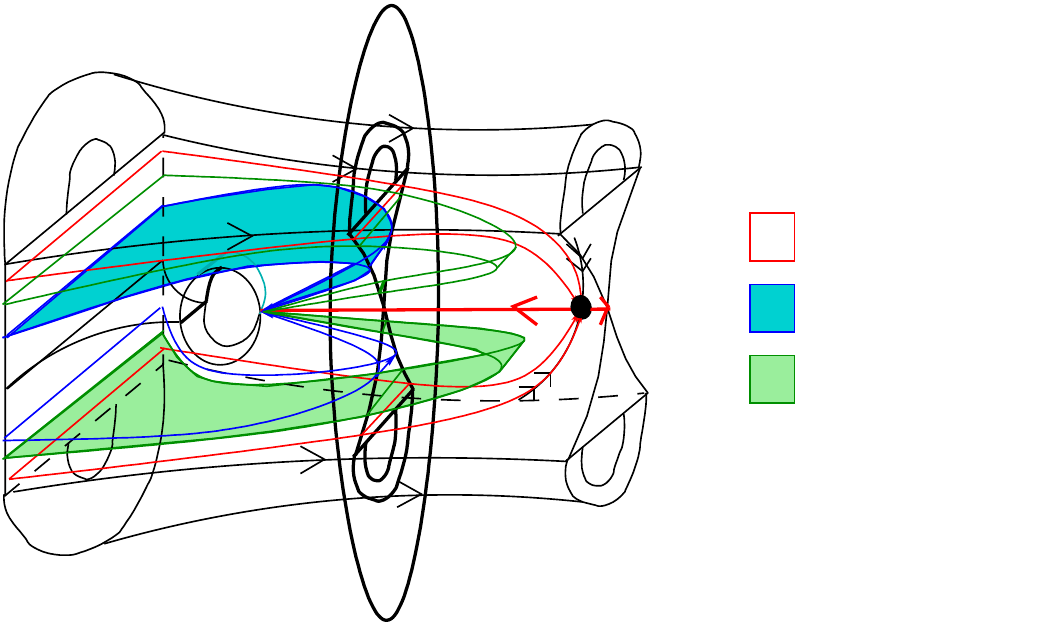_t}
\caption{\label{f1} Flow through of the face $1$.}
\end{center}
\end{figure}

Also, we must research the flow behavior inside to the face $1$,
but in the complement of Cherry box flow. However, we can to 
observe that the behavior flow is extended to the whole circle. 
This finishes the first step.

We must to observe the flow behavior on the face $2$. In this case, 
is easy to verify that all trajectories are crossing inward to the face $2$.
Thus, the flow through of the two faces is depicted in the following figure

\begin{figure}[htv]
\begin{center}
\input{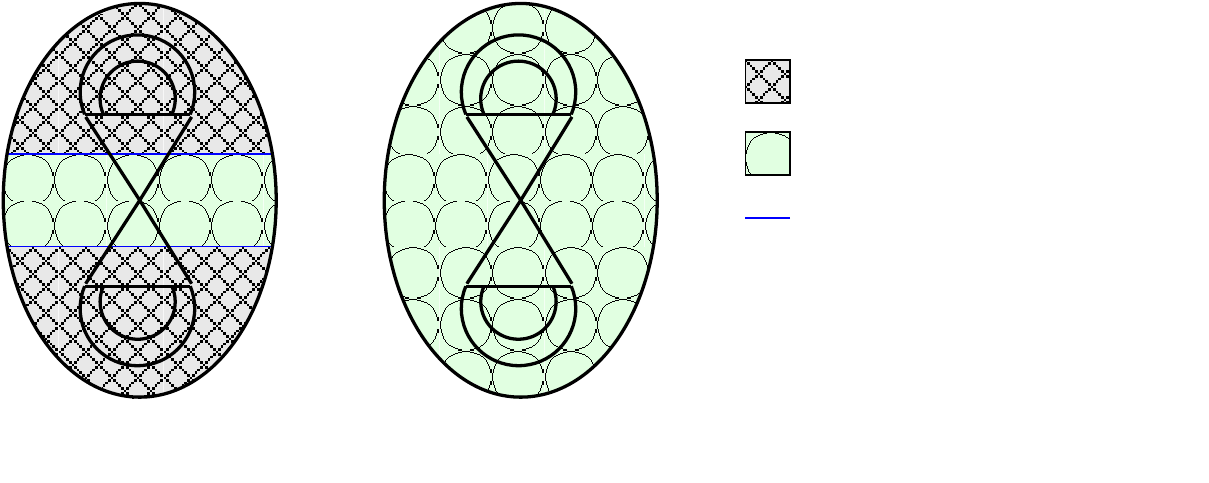_t}
\caption{\label{f} Direction of flow through the faces.}
\end{center}
\end{figure}

Now, we construct a plug $Y$ containing a singularity $\sigma_2$.  
Consequently, the dynamical system can be transferred by means of
plug $Y$ surgery from one bitorus onto another manifold exporting some of its properties.
This singularity generates a hole and this in turns generates a solid tritorus $ST_2$ in a
way that $Y_t(ST_2)\subset Int(ST_2)$ for all $t > 0$ and $Y$ is transverse to the boundary
tritorus. The flow is obtained gluing the plugs \ref{c4}, \ref{5} with the plug $Y$ as indicated in
Figure \ref{py}. Indeed, the third hole is generated by the unstable manifold of the 
singularity $\sigma_2$.

\begin{figure}[htv]
\begin{center}
\input{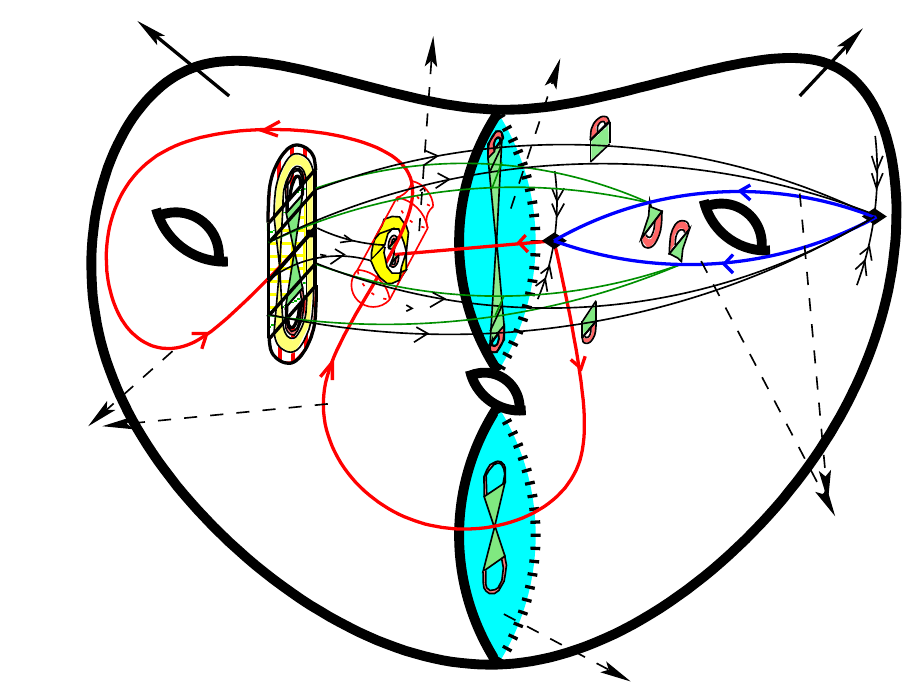_t}
\caption{\label{py} Plug $Y$}
\end{center}
\end{figure}

In the same way from the previous subsection, we require some hypotheses for the ambient manifold
(after of gluing).

\textbf{($\hat{X}$1)}: There are two repelling periodic orbits $O_1$, $O_2$ in $Int(ST_2)$ crossing the
holes of $R$.

\textbf{($\hat{X}$2)}: There are two solid tori neighborhoods $V_1, V_2 \subset Int(ST_2)$ of $O_1,O_2$ with
boundaries transverse to $Y_{t}$ such that if $N = ST_2 \setminus (V_1 \cup V_2)$, then $N$ is a
compact neighborhood with smooth boundary transverse to $Y_{t}$ and $Y_{t}(N)\subset
N$ for $t>0$. As $N$ is a solid tritorus with two solid tori removed, we have that
$N$ is connected. 

\textbf{($\hat{X}$3)}: $R \subset N$ and the return map $H$ induced by $Y$ in $R$ satisfies the properties
\textbf{(G1)-(G3)} in Section \ref{map2}. Moreover,

$$\{q\in N : Y_{t}(q) \notin R, \forall t\in \re\} = Cl(W_Y^{uu}(\sigma_2)).$$

\bigskip

Now, we define 
$$ \hat{A}^+=Cl\left(\bigcup_{t\in\re} Y_{t}(A^+_G)\right)\qquad 
\text{and}\qquad \hat{A}^-=Cl\left(\bigcup_{t\in\re} Y_{t}(A^-_G)\right).$$

By using the Propositions \ref{prop1}, \ref{secans} and Lemma \ref{homcla} 
we can obtain that the intersection of homoclinic classes is 
the closure of the unstable manifold of two singularities.\\

\textbf{Proof of Theorem \ref{thG}}.

By using the Lemma \ref{homcla} and the Proposition \ref{secans} we have
that $Y$ is a sectional Anosov flow and $N(Y)$ is the union of two
homoclinic classes $\mathcal{H}_Y^1, \mathcal{H}_Y^2$, where $\mathcal{H}_Y^1=\hat{A}^+$
and $\mathcal{H}_Y^2=\hat{A}^-$.
It implies that $\mathcal{H}_Y^1\cap \mathcal{H}_Y^2=Cl(W_Y^u(\sigma_1)\cup W_Y^u(\sigma_2))$.
In particular $Y$ is a Venice mask, and by construction it has two singularities.




\flushleft
A. M. L\'opez B.\\
Instituto de Matem\'atica, Universidade Federal do Rio de Janeiro\\
Rio de Janeiro, Brazil\\
E-mail: barragan@im.ufrj.br

\flushleft
H. M. S\'anchez S.\\
Instituto de Matem\'atica, Universidade Federal do Rio de Janeiro\\
Rio de Janeiro, Brazil\\
E-mail: hmsanchezs@unal.edu.co

\end{document}